\newcommand{\longsquiggly}{\xymatrix{{}\ar@{~>}[r]&{}}}
\numberwithin{equation}{section}
\newtheorem{theorem}{Theorem}
\newtheorem{lemma}{Lemma}
\newtheorem{corollary}{Corollary}
\newtheorem{proposition}{Proposition}
\newtheorem{definition}{Definition}
\newcommand{\bbint}[2]{\ensuremath{\;\backslash\!\!\!\!\backslash\!\!\!\!\!\int_{#1}^{#2}}}
\newlength{\dhatheight}
\begin{document}
% FOR AMS CLASS
	\title[Zeta function]{Finite-part integral representation of the Riemann zeta function at odd positive integers and consequent representations}
	\author{Eric A. Galapon}
	\address{Theoretical Physics Group, National Institute of Physics, University of the Philippines, Diliman Quezon City, 1101 Philippines}
	\email{eagalapon@up.edu.ph}
	%\date{\today}
	%\subjclass[2000]{XXX}

%FOR ARTICLE CLASS
%\title{{\bf Regularized Limit, Analytic Continuation and Finite-Part Integration}}
%
%\author{Eric A. Galapon \thanks{email: eagalapon@up.edu.ph}\\Theoretical Physics Group, National Institute of Physics\\University of the Philippines, Diliman Quezon City\\1101 Philippines}
%
%\date{\today}

\begin{abstract} The values of the Riemann zeta function at odd positive integers, $\zeta(2n+1)$, are shown to admit a representation proportional to the finite-part of the divergent integral $\int_0^{\infty} t^{-2n-1} \operatorname{csch}t\,\mathrm{d}t$. Integral representations for $\zeta(2n+1)$ are then deduced from the finite-part integral representation. Certain relations between $\zeta(2n+1)$ and $\zeta'(2n+1)$ are likewise deduced, from which integral representations for $\zeta'(2n+1)$ are obtained. 
\end{abstract}
\maketitle

\section{Introduction}
The Riemann zeta function, $\zeta(s)$, extends in the entire complex plane the sum of the reciprocal of the integer powers of the positive integers. The zeta function is defined by the infinite series 
\begin{equation}\label{zetafunc}
	\zeta(s)=\sum_{k=1}^{\infty} \frac{1}{k^s}
\end{equation}
in the domain $\operatorname{Re} s>1$, and by analytic continuation %of the series \eqref{zetafunc} 
in the rest of the complex plane. The values and the arithmetic nature of $\zeta(s)$ at integer arguments are almost known \cite{roman}. Its values at non-positive integers are given by
\begin{equation}
    \zeta(-n)=(-1)^n \frac{B_{n+1}}{n+1}, \;\;\; n=0,1,\,2,\dots ,
\end{equation}
where the $B_{n+1}$'s are the Bernoulli numbers; specifically, $\zeta(0)=-1/2$ and $\zeta(-n)=0$ for even negative integer $-n$. The values at positive even integers are likewise known and are given by
\begin{equation}\label{zetaeven}
    \zeta(2n)=\frac{(-1)^{n+1} (2\pi)^{2n} B_{2n}}{2 (2n)!} , \;\;\; n=1,\,2,\,3\dots ,
\end{equation}
where the $B_{2n}$'s are again the Bernoulli numbers. The Bernoulli numbers are rational being finite sums of rational numbers whose explicit forms are known \cite{gould}. Hence the values and the arithmetic nature of the zeta function at non-positive and even positive integers are settled---the zeta function is rational at non-positive integers; and it is both irrational and transcendental at even positive integers, owing to $\pi$ being both irrational and transcendental.  

However, the values of the zeta function at odd positive integers, the sums
\begin{equation}\label{oddzetasum}
    \zeta(2n+1)=\sum_{k=1}^{\infty}\frac{1}{k^{2n+1}},\;\;\; n=1, 2, 3,\dots ,
\end{equation}
have no known closed form evaluation similar to  the values at non-odd positive integers. Attempts have been made since Euler to evaluate the sums in terms of known mathematical constants and elementary functions without success. But there  is no established fact that even remotely imply that the sum \eqref{oddzetasum} cannot be closed. Not only that the values at odd positive integers are not known in closed form but also their arithmetic nature is a mystery. Whether they are rational or irrational is not known except for $\zeta(3)$ which Apery showed in 1979 to be irrational \cite{apery}. Beyond Apery's result are the result of Rivoal in 2000 that showed that there are infinitely many irrational values of the zeta function at odd positive integers \cite{rivoal}, and the result of Zudilin in 2006 that showed that at least one of the values $\zeta(5)$, $\zeta(7)$, $\zeta(9)$ and $\zeta(11)$ is irrational \cite{zudilin}. These results constitute the best knowledge available today on  the irrationality of the $\zeta(2n+1)$'s. Not only that there is little insight into their irrationality but nothing is known about their transcendence \cite{murty}. 

As the sum \eqref{oddzetasum} does not offer much insight into the nature of $\zeta(2n+1)$, the properties of $\zeta(2n+1)$ can only be accessed by recasting the sum in some other forms or in certain representations that are amenable to analysis. Apery managed to show that $\zeta(3)$ is irrational by recasting the defining infinite series into the sufficiently fast converging series, 
\begin{equation}\label{apery}
    \zeta(3)=\frac{5}{2}\sum_{k=1}^{\infty}\frac{(-1)^{k-1}}{k^3 \binom{2k}{k}} .
\end{equation}
Beukers later improved on the proof of Apery by means of the double integral representation \cite{beukers}
\begin{equation}\label{beukers}
\zeta(3)=\frac{1}{2}\int_0^1\int_0^1\frac{\ln xy}{xy-1}\mathrm{d}x\mathrm{d}y.
\end{equation}
The examples of Apery and Beukers motivate obtaining new representations of $\zeta(2n+1)$ which may provide starting points in the analysis of its properties. Numerous infinite series \cite{yue,lima,cvi,srivasta} and integral \cite{klinowski,goldstein,petros,ito} representations exist for $\zeta(2n+1)$. Each representation that we know so far, including those that will be reported in this paper, can be seen as a failed attempt at evaluating the sum \eqref{oddzetasum} in closed form. Nevertheless, each representation possesses a set of properties that can be exploited to prove certain properties of $\zeta(2n+1)$. Apery's infinite series representation \eqref{apery} and Beukers' integral representation \eqref{beukers} do not yield closed form evaluation of $\zeta(3)$ but they have the appropriate properties to prove the irrationality of $\zeta(3)$. 

In this paper we offer a novel representation of the zeta function at odd positive integers which may hopefully offer some insight into the properties of $\zeta(2n+1)$. We show that the value of $\zeta(2n+1)$ for all positive integer $n$ is proportional to the finite-part of the divergent integral
\begin{equation}\label{divergentsinh}
	\int_0^{\infty} \frac{\mathrm{d}t}{t^{2n+1}\sinh t},
\end{equation}
where the divergence arises from the non-integrable singularity at the origin. If we denote the finite-part by $\bbint{0}{\infty}$, which we will define precisely in the next Section, 
we establish the following %finite-part integral (FPI) 
representation for $\zeta(2n+1)$ in terms of the finite-part of the divergent integral \eqref{divergentsinh}. 
\begin{theorem}[Main Theorem]\label{maintheorem}
The zeta function at odd positive integers assumes the finite-part integral (FPI) representation
\begin{equation}\label{zetafpirep}
    \zeta(2n+1)=(-1)^{n+1} \frac{(2\pi)^{2n}}{(2^{2n}-1)} \bbint{0}{\infty} \frac{\mathrm{d}t}{t^{2n+1} \sinh t},\;\; n=1, 2, 3,\dots  .
\end{equation}
\end{theorem}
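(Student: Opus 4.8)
The plan is to realize the finite-part integral as the analytic continuation in $s$ of the Mellin transform of $\operatorname{csch}$, evaluated at the point $s=-2n$. First I would establish, for $\operatorname{Re} s>1$, the elementary identity
\begin{equation}
\int_0^\infty t^{s-1}\operatorname{csch} t\,\mathrm{d}t = 2\sum_{k=0}^\infty\int_0^\infty t^{s-1}e^{-(2k+1)t}\,\mathrm{d}t = 2\bigl(1-2^{-s}\bigr)\Gamma(s)\zeta(s),
\end{equation}
obtained by writing $\operatorname{csch} t = 2\sum_{k=0}^\infty e^{-(2k+1)t}$ for $t>0$ and integrating term by term, using $\sum_{k\ge 0}(2k+1)^{-s}=(1-2^{-s})\zeta(s)$. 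This exhibits the convergent integral as the restriction of a meromorphic function $F(s)=2(1-2^{-s})\Gamma(s)\zeta(s)$ defined on the whole plane.

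The second step, and the crux of the argument, is to show that the finite part $\bbint{0}{\infty} t^{-2n-1}\operatorname{csch} t\,\mathrm{d}t$ coincides with $F(-2n)$. Using the definition of the finite part to be given in the next Section, I would split off from $\operatorname{csch} t$ the finitely many terms of its Laurent expansion $\operatorname{csch} t = t^{-1}-\tfrac{1}{6}t+\cdots$ that produce the non-integrable powers $t^{-2n-2},\dots,t^{-2}$, regularize $\int_\epsilon^\infty$, and identify the finite remainder with the value obtained by continuing $F(s)$ analytically past the poles of $\Gamma$. This matching is the natural bridge between the two constructions but requires care with the regularization of the singularity at the origin, and I expect it to be the main obstacle.

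Finally I would evaluate $F(-2n)$. For $n\ge 1$ the point $s=-2n$ is a simple pole of $\Gamma$ with residue $1/(2n)!$, while $\zeta$ has a simple zero there; hence $F$ is regular at $s=-2n$ and its value is governed by $\zeta'(-2n)$. Differentiating the functional equation $\zeta(s)=2^s\pi^{s-1}\sin(\pi s/2)\Gamma(1-s)\zeta(1-s)$ and using $\sin(-n\pi)=0$ gives $\zeta'(-2n)=(-1)^n (2n)!\,\zeta(2n+1)/\bigl(2(2\pi)^{2n}\bigr)$. Combining this with the residue of $\Gamma$ and the factor $(1-2^{-s})\big|_{s=-2n}=1-2^{2n}$ yields
\begin{equation}
F(-2n)=(-1)^{n+1}\frac{2^{2n}-1}{(2\pi)^{2n}}\,\zeta(2n+1),
\end{equation}
and solving for $\zeta(2n+1)$ produces the stated representation.
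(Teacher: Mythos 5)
Your proposal is correct, and it takes a genuinely different route from the paper. The paper works entirely on the real line: it splits the finite part at $b=1$ using the splitting property of Lemma~\ref{mainlemma}, evaluates $\bbint{0}{1}t^{-2n-1}\operatorname{csch}t\,\mathrm{d}t$ by term-by-term integration of the Bernoulli-number expansion of $\operatorname{csch}$, and evaluates $\int_1^{\infty}$ via the Mittag--Leffler partial-fraction expansion of $\operatorname{csch}$, a recursion for $\int_1^{\infty}t^{-2n}(\pi^2k^2+t^2)^{-1}\mathrm{d}t$, and the $\arctan$ series; the $\zeta(2n+1)$ term then survives a wholesale cancellation of the remaining sums against the local expansion. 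You instead identify the finite part with the analytic continuation of the Mellin transform $F(s)=2(1-2^{-s})\Gamma(s)\zeta(s)$ at $s=-2n$ and evaluate it through the functional equation via $\zeta'(-2n)=(-1)^n(2n)!\,\zeta(2n+1)/(2(2\pi)^{2n})$; your arithmetic checks out and reproduces $F(-2n)=(-1)^{n+1}(2^{2n}-1)(2\pi)^{-2n}\zeta(2n+1)$. The bridging step you flag as the main obstacle is indeed the crux, but it presents no real difficulty here: because the local expansion of $\operatorname{csch}t$ contains only odd powers $t^{2k-1}$, the singular terms $t^{2k-2n-2}$ are all even negative powers, so no $\ln\epsilon$ appears and the Hadamard finite part coincides with the continuation at the regular point $s=-2n$ (write $M(s)=\int_0^1 t^{s-1}g(t)\,\mathrm{d}t+\sum_{k=0}^{n}c_k/(s+2k-1)+\int_1^{\infty}t^{s-1}\operatorname{csch}t\,\mathrm{d}t$ with $g$ the subtracted remainder, continue each piece, and set $s=-2n$); this should be spelled out, since at odd arguments the continuation has a pole and the identification would fail. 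What each approach buys: yours is shorter and conceptually transparent---the theorem is the reflection formula at the trivial zeros in disguise---while the paper's elementary real-variable computation is self-contained and produces along the way the subsidiary formulas (the $b$-parametrized representation and the subtracted-integrand representation) on which the later sections depend.
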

\noindent On its own, this representation may be used to compute $\zeta(2n+1)$ using known quadrature evaluations of the finite-part. But the FPI-representation is more than a computational tool---it is a capsule representation that packs a class of representations arising from numerous possible representations of the finite-part integral. 

A fundamental property of the finite-part integral established in \cite{galapon1} and \cite{galapon2} is that the finite-part (under certain analycity conditions) admits a contour integral representation in the complex plane, which implies that the finite-part is the value of a convergent integral and that it may assume an integral representation on the real line. Here we obtain two integral representations of the finite-part of \eqref{divergentsinh} which in turn give two corresponding integral representations for the zeta function at odd positive integers on the positive real axis. The representations 
lead to the following integral representations for the Apery constant,
\begin{equation}
    \zeta(3)=\frac{4 \pi^2}{3} \int_0^{\infty} \left(\frac{t}{\sinh t}+\frac{t^2}{6}-1\right)\,\frac{\mathrm{d}t}{t^4},
\end{equation}
\begin{equation}
    \zeta(3)=\frac{2\pi^2}{9}\int_0^{\infty}\left(-\frac{6t \cosh^3 t}{\sinh^4 t} + \frac{6 \cosh^2 t}{\sinh^3 t} +\frac{5 t \cosh t}{\sinh^2 t}-\frac{3}{\sinh t}\right)\,\frac{{\rm d}t}{t} .
\end{equation}
Moreover, we obtain a contour integral representation for $\zeta(2n+1)$ following from the fundamental contour integral representation of the finite-part of divergent integrals with non-integrable pole singularity at the origin \cite{galapon2,galapon3,galapon4,galapon5,galapon6}. Then we show that an appropriate deformation of the contour of integration uncovers certain relationships between the values of the zeta function $\zeta(2n+1)$ and its derivative $\zeta'(2n+1)$, which in turn lead to integral representations for $\zeta'(2n+1)$. 

The rest of the paper is organized as follows. In Section-\ref{fpidiscussion}, we provide a brief discussion of finite-part integrals and prove a fundamental property of the finite-part integral that is central to the development to follow. 
In Section-\ref{proofMainTheorem}, we prove the finite-part integral representation \eqref{zetafpirep} of the zeta function at odd positive integers using the result of the previous Section. In Section-\ref{proofTheorem2}, we obtain two integral representations for $\zeta(2n+1)$ from the FPI-representation by means of two ways of extracting the finite-part integral. We then deduce another integral representation from the second integral representation. In Section-\ref{proofContourRep}, we derive the contour integral representation of $\zeta(2n+1)$. In Section-\ref{proofTheorem3}, we obtain the integral representation of the first derivative at odd positive integers following from the contour integral representation of $\zeta(2n+1)$, in the process we obtain relationships between $\zeta(2n+1)$ and $\zeta'(2n+1)$. In Section-\ref{conclusion}, we conclude. 

\section{The Finite-Part Integral}\label{fpidiscussion}
Suppose that $f(t)$ is analytic at the origin, $f(0)\neq 0$, and locally integrable in the interval $[0,a]$, where $0<a\leq\infty$. Then the integral
\begin{equation}\label{divergentint}
    \int_0^{a} \frac{f(t)}{t^{\lambda}}\,\mathrm{d}t,\;\;\;\operatorname{Re}\lambda\geq 1,
\end{equation}
is divergent due to the non-integrability of $t^{-\lambda}$ at the origin. The finite-part is obtained by replacing the offending lower limit of integration by some positive $\epsilon<a$ and decomposing the integral in the form
\begin{equation}\label{convint}
    \int_{\epsilon}^{a} \frac{f(t)}{t^{\lambda}}\mathrm{d}t = C_{\epsilon}+D_{\epsilon},
\end{equation}
where $C_{\epsilon}$ is the group of terms that converge in the limit $\epsilon\rightarrow$ and $D_{\epsilon}$ is the group of terms that diverge in the same limit. The finite-part is uniquely defined by requiring that the diverging term $D_{\epsilon}$ consists only of diverging algebraic powers of $\epsilon$ and $\ln\epsilon$ \cite{monegato,hadamard}. 
\begin{definition}
Under the stated conditions on $f(t)$, the finite-part of the divergent integral \eqref{divergentint} is given by
\begin{equation}\label{fpidefinition}
    \bbint{0}{a}\frac{f(t)}{t^{\lambda}}\,\mathrm{d}t = \lim_{\epsilon\rightarrow 0} C_{\epsilon} .
\end{equation}
When the upper limit of integration is infinity, the finite-part is defined to be the limit
\begin{equation}\label{fpidefinitioninf}
    \bbint{0}{\infty}\frac{f(t)}{t^{\lambda}}\,\mathrm{d}t=\lim_{a\rightarrow\infty}\bbint{0}{a}\frac{f(t)}{t^{\lambda}}\,\mathrm{d}t,
\end{equation}
provided the limit exists.
\end{definition}

The following Lemma establishes a fundamental property of the finite-part integral which is the basis of the development to follow. 
\begin{lemma}\label{mainlemma}
Let $f(t)$ satisfy all stated conditions. If $\operatorname{Re}\lambda\geq 1$, then for every positive $b<a$,
\begin{equation}\label{fpilemma}
    \bbint{0}{a} \frac{f(t)}{t^{\lambda}}\,\mathrm{d}t = \bbint{0}{b} \frac{f(t)}{t^{\lambda}}\,\mathrm{d}t + \int_{b}^{a} \frac{f(t)}{t^{\lambda}}\,\mathrm{d}t .
\end{equation}
The equality holds for $a=\infty$ if $f(t) t^{-{\lambda}}$ is integrable at infinity.
\end{lemma}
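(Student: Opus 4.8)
The plan is to prove the decomposition by working directly from the definition of the finite-part as the limit of the convergent group of terms $C_\epsilon$. The key observation is that the singular behavior of $f(t)/t^\lambda$ near the origin is entirely local: it depends only on the Taylor coefficients of $f$ at $0$, not on the upper limit of integration. So the plan is to split the truncated integral $\int_\epsilon^a$ at the intermediate point $b$, write it as $\int_\epsilon^b + \int_b^a$, and argue that the second piece is already convergent (hence contributes nothing to the divergent group $D_\epsilon$) while the first piece produces exactly the same $D_\epsilon$ whether we integrate up to $b$ or up to $a$.

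Concretely, I would start by writing out $\int_\epsilon^b f(t)/t^\lambda\,\mathrm{d}t = C_\epsilon^{(b)} + D_\epsilon^{(b)}$ according to the defining decomposition \eqref{convint}, where $D_\epsilon^{(b)}$ collects the terms consisting of negative powers of $\epsilon$ and $\ln\epsilon$. Since $\int_b^a f(t)/t^\lambda\,\mathrm{d}t$ is a finite constant independent of $\epsilon$ (the integrand is continuous on the compact interval $[b,a]$ because the only singularity is at the origin), adding it to $\int_\epsilon^b$ changes only the convergent part, giving the decomposition for $\int_\epsilon^a$ as
\begin{equation}
    \int_\epsilon^a \frac{f(t)}{t^\lambda}\,\mathrm{d}t = \left(C_\epsilon^{(b)} + \int_b^a \frac{f(t)}{t^\lambda}\,\mathrm{d}t\right) + D_\epsilon^{(b)}.
\end{equation}
The crucial point is that the diverging group $D_\epsilon$ is uniquely characterized as consisting only of algebraic powers of $\epsilon$ and $\ln\epsilon$, so the diverging group for $\int_\epsilon^a$ must coincide with $D_\epsilon^{(b)}$, and the convergent group for $\int_\epsilon^a$ is the parenthesized expression. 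Taking $\epsilon\to 0$ then yields $\bbint{0}{a} f/t^\lambda\,\mathrm{d}t = \bbint{0}{b} f/t^\lambda\,\mathrm{d}t + \int_b^a f/t^\lambda\,\mathrm{d}t$, which is \eqref{fpilemma}.

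The main obstacle I anticipate is justifying the uniqueness step rigorously: one must argue that the splitting of a truncated integral into a convergent group and a diverging group made up purely of powers of $\epsilon$ and $\ln\epsilon$ is unambiguous, so that appending an $\epsilon$-independent constant cannot be absorbed into or alter the diverging group. This is where the precise characterization of $D_\epsilon$ as containing only genuinely divergent monomials (no constant term) does the work, and I would lean on the cited uniqueness property \cite{monegato,hadamard} to close the argument.

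For the infinite upper limit, I would take $a\to\infty$ in the finite-$a$ identity, using definition \eqref{fpidefinitioninf} on the left. The hypothesis that $f(t)t^{-\lambda}$ is integrable at infinity guarantees that $\int_b^a f/t^\lambda\,\mathrm{d}t$ converges to $\int_b^\infty f/t^\lambda\,\mathrm{d}t$, and since $\bbint{0}{b}$ is independent of $a$, the limit of the right-hand side exists and equals $\bbint{0}{b} f/t^\lambda\,\mathrm{d}t + \int_b^\infty f/t^\lambda\,\mathrm{d}t$, establishing the equality for $a=\infty$.
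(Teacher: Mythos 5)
Your proposal is correct and follows essentially the same route as the paper: split the truncated integral at $b$, observe that $\int_b^a f(t)t^{-\lambda}\,\mathrm{d}t$ is an $\epsilon$-independent constant that joins the convergent group while the divergent group comes entirely from $\int_\epsilon^b$, invoke the uniqueness of the decomposition into convergent and purely divergent ($\epsilon^{-\mu}$, $\ln\epsilon$) terms, and pass to the limits $\epsilon\to 0$ and then $a\to\infty$. Your explicit remark that a constant cannot be absorbed into the diverging group is a point the paper leaves implicit, but the argument is the same.
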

\begin{proof}
Let $b$ be a positive number such that $\epsilon<b<a$ for some arbitrarily small positive number $\epsilon$. Since the integral in the left hand side of \eqref{convint} is convergent, it is a fundamental theorem in calculus that the integral \eqref{convint} can be split in two parts,
\begin{equation}\label{prefpi}
	\int_{\epsilon}^{a} \frac{f(t)}{t^{\lambda}}\mathrm{d}t = \int_{\epsilon}^{b} \frac{f(t)}{t^{\lambda}}\mathrm{d}t + \int_{b}^{a} \frac{f(t)}{t^{\lambda}}\mathrm{d}t
\end{equation}
Clearly the second integral in \eqref{prefpi} belongs to the converging term $C_{\epsilon}$, and the diverging term $D_{\epsilon}$ solely resides in the first integral. 

To extract the finite-part, we decompose the first integral in terms of its converging and diverging parts,
\begin{equation}
    \int_{\epsilon}^b \frac{f(t)}{t^\lambda}\,\mathrm{d}t = \tilde{C}_{\epsilon} + \tilde{D}_{\epsilon},
\end{equation}
so that the finite-part of the divergent integral $\int_0^b t^{-\lambda} f(t)\,\mathrm{d}t$ is given by
\begin{equation}
    \bbint{0}{b}\frac{f(t)}{t^{\lambda}}\,\mathrm{d}t = \lim_{\epsilon\rightarrow 0} \tilde{C}_{\epsilon} .
\end{equation}
Now the converging part of the original divergent integral is given by
\begin{equation}\label{peko}
    C_{\epsilon}=\tilde{C}_{\epsilon}+\int_b^a \frac{f(t)}{t^{\lambda}}\,\mathrm{d}t.
\end{equation}
Taking the limit as $\epsilon\rightarrow 0$ in both sides of \eqref{peko}, we are led to the desired equality \eqref{fpilemma}, in accordance with the definition of the finite-part integral.

If $t^{-\lambda} f(t)$ is integrable at infinity, then the limit of the finite-part \eqref{fpilemma} as $a\rightarrow\infty$ exists. Thus equation \eqref{fpilemma} holds when the upper limit of integration is infinity. 
\end{proof}

This property of the finite-part integral, which we will refer to as the splitting-property, affords us the ability to extract the finite-part when $f(t)$ has a singularity in the complex plane which is closer to the origin than the upper limit of integration. Choosing $b$ smaller than the distance of the nearest singularity to the origin allows us to perform expansion about $t=0$ and perform term by term integration to identify the diverging terms that have to be discarded. 

\section{The Finite-Part Integral Representation:\\Proof of the Main Theorem}\label{proofMainTheorem}
%\begin{proof}[Proof of the Main Theorem] 
To prove the Main Theorem, we evaluate the finite-part of the divergent integral \eqref{divergentsinh} and show that it is proportional to the zeta function $\zeta(2n+1)$. We cast the divergent integral in the following form to assume the form required in the previous Section,
\begin{equation}
    \int_0^{\infty}\frac{\mathrm{d}t}{t^{2n+1}\sinh t}=\int_0^{\infty}\frac{1}{t^{2n+2}}\left(\frac{t}{\sinh t}\right)\,\mathrm{d}t.
\end{equation}
We identify $f(t)=t/\sinh t$; $f(t)$ is locally integrable in the entire real line, $t^{-2n-1}f(t)$ is integrable at infinity, and $f(0)\neq 0$. Then we can use Lemma-\ref{mainlemma} to evaluate the finite-part. The singularities of $f(t)$ in the complex plane nearest to the origin are at a distance $\pi$. For convenience, we choose $b=1$. Then Lemma-\ref{mainlemma} gives 
\begin{equation}\label{fpirep1}
	\bbint{0}{\infty} \frac{\mathrm{d}t}{t^{2n+1}\sinh t} = \bbint{0}{1} \frac{\mathrm{d}t}{t^{2n+1}\sinh t}+\int_{1}^{\infty} \frac{\mathrm{d}t}{t^{2n+1}\sinh t}.
\end{equation}	
The problem reduces to evaluating a finite-part integral and a regular integral.

To extract the finite-part of the divergent integral $\int_0^1 t^{-2n-1}\operatorname{csch} t\,\mathrm{d}t$, we evaluate the integral
\begin{equation}\label{integralepsilon}
    \int_{\epsilon}^1\frac{\mathrm{d}t}{t^{2n+1}\sinh t},
\end{equation}
for some positive $\epsilon<1$. We do so by employing the expansion
\begin{equation}\label{expansioncsch}
	\frac{1}{\sinh z} = - \sum_{k=0}^{\infty}\frac{(2^{2k}-2) B_{2k}}{(2k)!} z^{2k-1},\;\; |z|<\pi,
\end{equation}
where the $B_{2k}$'s are Bernoulli numbers. Substituting the expansion back into integral \eqref{integralepsilon} and performing term by term integration, which is allowed because the series converges uniformly in the entire range of integration, yield
\begin{equation}\label{buko}
\int_{\epsilon}^{1} \frac{\mathrm{d}t}{t^{2n+1}\sinh t} = - \sum_{k=0}^{\infty} \frac{(2^{2k}-2)B_{2k}}{(2k)! (2k-2n-1)} \left(1-\epsilon^{2k-2n-1}\right).
\end{equation}
Since the $\epsilon$-terms either diverge or vanish in the limit, the finite-part is obtained by dropping all terms in $\epsilon$. Thus the desired finite-part is
\begin{equation}\label{fpi0to1}
	\bbint{0}{1} \frac{\mathrm{d}t}{t^{2n+1}\sinh t} = - \sum_{k=0}^{\infty} \frac{(2^{2n}-2)B_{2k}}{(2k)! (2k-2n-1)}.
\end{equation}

To evaluate the regular integral in the second term of equation \eqref{fpirep1}, we use the expansion
\begin{equation}\label{expansion2}
	\frac{1}{\sinh z} = \frac{1}{z} + 2 z \sum_{k=1}^{\infty} \frac{(-1)^k}{\pi^2 k^2 + z^2},\;\;\; \frac{i z}{\pi}\neq \mathbb{Z}.
\end{equation}
Substituting this expansion back in the integral and performing term by term integration give
\begin{equation}\label{kwa}
	\int_1^{\infty}\frac{\mathrm{d}t}{t^{2n+1}\sinh t} = \frac{1}{(2n+1)} + 2 \sum_{k=1}^{\infty} (-1)^k \int_1^{\infty} \frac{\mathrm{d}t}{t^{2n} (\pi^2 k^2 + t^2)}.
\end{equation}
Term by term integration is allowed because the sum in \eqref{expansion2} converges uniformly in the entire interval of integration and each integral in the right hand side of \eqref{kwa} converges. Using reference \cite[p.80,\#2.176]{gr}, we obtain
\begin{equation}\label{nana}
	\int_1^{\infty}\frac{\mathrm{d}x}{t^{2n} (\pi^2 k^2+t^2)} = \frac{1}{(2n-1) \pi^2 k^2} - \frac{1}{\pi^2 k^2} \int_1^{\infty} \frac{\mathrm{d}t}{t^{2(n-1)} (\pi^2 k^2 + t^2)}
\end{equation}
In this form, it is clear that we can evaluate the desired integral by $n$ recursive application of this identity. After $n$ recursions, we arrive at the tail with the integral
\begin{equation}
	\int_1^{\infty} \frac{\mathrm{d}t}{\pi^2 k^2 + t^2} = \frac{1}{2k}-\frac{1}{k\pi} \arctan\!\left(\frac{1}{k\pi}\right).
\end{equation}
Then by induction, the integral in the left hand side of \eqref{nana} evaluates to
\begin{equation}\label{bec}
	\begin{split}
	\int_1^{\infty} \frac{\mathrm{d}t}{t^{2n}(\pi^2 k^2 + t^2)}=& \sum_{l=1}^{n} \frac{(-1)^l}{(2l-2n-1) \pi^{2l} k^{2l}} \\
	& + \frac{(-1)^n}{2\pi^{2n} k^{2n+1}} -\frac{(-1)^n}{\pi^{2n+1} k^{2n+1}}\, \arctan\!\left(\frac{1}{k\pi}\right) .
	\end{split}
\end{equation}

Substituting the integral \eqref{bec} back into equation \eqref{kwa} and distributing the summation, we arrive at
\begin{equation}\label{sumsum2}
	\begin{split}
	\int_1^{\infty} \frac{\mathrm{d}t}{t^{2n+1}\sinh t} =& \frac{1}{(2n+1)} + 2 \sum_{l=1}^n \frac{(-1)^l}{(2n+1-l)\pi^{2n}} \sum_{k=1}^{\infty} \frac{(-1)^k}{k^{2l}}\\
	&\hspace{-6mm}+\frac{(-1)^n}{\pi^{2n}}\sum_{k=1}^{\infty}\frac{(-1)^k}{k^{2n+1}}-(-1)^n \frac{2}{\pi^{2n+1}} \sum_{k=1}^{\infty} \frac{(-1)^k}{k^{2n+1}}\arctan\!\left(\frac{1}{k\pi}\right)
	\end{split} .
\end{equation}
The first two infinite series are evaluated  in terms of the zeta function by means of the known sum
\begin{equation}\label{zetasum}
	\sum_{k=1}^{\infty} \frac{(-1)^k}{k^s}=(2^{1-s}-1)\zeta(s), \;\;\; \mathrm{Re} s>0 .
\end{equation}
To evaluate the third summation, we use the expansion
\begin{equation}\label{arctan}
	\arctan z=\sum_{l=0}^{\infty} \frac{(-1)^l}{(2l+1)}z^{2l+1}, \;\;\; |z|<1.
\end{equation}
This expansion \eqref{arctan}  holds for our present case because $z=1/k\pi<1$ for all $k$. We substitute the expansion, interchange the order of summation, which we can do because the double sum converges absolutely, and perform simplifications. The result is
\begin{equation}\label{gogol}
	\sum_{k=1}^{\infty} \frac{(-1)^k}{k^{2n+1}}\arctan\!\left(\frac{1}{k\pi}\right)=(-1)^n \pi^{2n+1} \sum_{k=n+1}^{\infty} \frac{(-1)^k (2^{2k}-2)}{(2k-2n-1)(2\pi)^{2k}}\zeta(2k) .
\end{equation}
Gathering the sums \eqref{zetasum} and \eqref{gogol} back into equation \eqref{sumsum2}, we find obtain the integral 
\begin{equation}\label{onetoinf}
\begin{split}
	\int_1^{\infty}  \frac{\mathrm{d}t}{t^{2n+1}\sinh t} =&\frac{1}{(2n+1)} - 2 \sum_{k=1}^{\infty} \frac{(-1)^k (2^{2k}-2)}{(2k-2n-1) 2^{2k}}\zeta(2k)\\
	&\hspace{12mm}+ \frac{(-1)^{n+1} (2^{2n}-1)}{(2\pi){2n}} \zeta(2n+1) .
	\end{split}
\end{equation}

Finally, we return the finite-part integral \eqref{fpi0to1} and the regular-integral \eqref{onetoinf} back into equation \eqref{fpirep1}. Writing $\zeta(2k)$ in \eqref{onetoinf} in terms of the Bernoulli numbers by means of equation \eqref{zetaeven}, we find that the first two terms of \eqref{onetoinf} exactly cancels the finite-part integral \eqref{fpi0to1}, leaving us with the expression for the desired finite-part,
\begin{equation}
    \bbint{0}{\infty} \frac{\mathrm{d}t}{t^{2n+1} \sinh t}=\frac{(-1)^{n+1}(2^{2n}-1)}{(2\pi)^{2n}}\,\zeta(2n+1).
\end{equation}
Solving for $\zeta(2n+1)$ yields the finite-part integral representation \eqref{maintheorem} for the value of the zeta function at odd positive integers. This completes the proof.
%\end{proof}

An immediate consequence of our method of proof is a parametrized representation of $\zeta(2n+1)$ obtained by lifting the restriction that we have on the parameter $b$ in splitting the integral. \begin{theorem}
For every positive integer $n$ and for all positive $b<\pi$, the finite-part of the divergent integral $\int_0^{\infty}{\rm d}t/t^{2n+1} \sinh t$ is given by
\begin{equation}\label{fpirep0}
\begin{split}
    \bbint{0}{\infty}\frac{{\rm d}t}{t^{2n+1}\sinh t} = \sum_{k=0}^{\infty} \frac{(2^{2k}-2)B_{2k} b^{2k-2n-1}}{(2k)! (2n+1-2k)}  + \int_b^{\infty}\frac{{\rm d}t}{t^{2n+1}\sinh t},
    \end{split}
\end{equation}
and the zeta function at odd positive integers thereby assumes the representation
\begin{equation}\label{zetaoddrep0}
    \zeta(2n+1)=(-1)^{n+1} \frac{(2\pi)^{2n}}{(2^{2n}-1)} \left(\sum_{k=0}^{\infty} \frac{(2^{2k}-2)B_{2k} b^{2k-2n-1}}{(2k)! (2n+1-2k)}  + \int_b^{\infty}\frac{{\rm d}t}{t^{2n+1}\sinh t}\right) 
\end{equation}
\end{theorem}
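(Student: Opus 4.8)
The plan is to repeat the argument that proved the Main Theorem, but to leave the splitting point $b$ arbitrary instead of fixing it at $b=1$. The integrand is cast as $t^{-2n-2} f(t)$ with $f(t)=t/\sinh t$, which is locally integrable on $[0,\infty)$, satisfies $f(0)=1\neq 0$, and has $t^{-2n-1}f(t)$ integrable at infinity; its nearest singularities to the origin lie at $t=\pm i\pi$, a distance $\pi$ away. Hence for any $0<b<\pi$, Lemma~\ref{mainlemma} (with $a=\infty$) licenses the split
\[
\bbint{0}{\infty}\frac{\mathrm{d}t}{t^{2n+1}\sinh t} = \bbint{0}{b}\frac{\mathrm{d}t}{t^{2n+1}\sinh t} + \int_b^{\infty}\frac{\mathrm{d}t}{t^{2n+1}\sinh t},
\]
so the whole task reduces to evaluating the finite-part over $[0,b]$.

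To compute that finite-part, I would insert the expansion \eqref{expansioncsch} for $1/\sinh t$ and integrate term by term over $[\epsilon,b]$, exactly as in the passage leading to \eqref{buko}, obtaining
\[
\int_{\epsilon}^{b}\frac{\mathrm{d}t}{t^{2n+1}\sinh t} = -\sum_{k=0}^{\infty}\frac{(2^{2k}-2)B_{2k}}{(2k)!\,(2k-2n-1)}\bigl(b^{2k-2n-1}-\epsilon^{2k-2n-1}\bigr).
\]
Because $2k-2n-1$ is odd and therefore never zero, no $\ln\epsilon$ term appears and the diverging part consists purely of algebraic powers of $\epsilon$; by the definition of the finite-part these are discarded, leaving only the $b$-series. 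Rewriting $2k-2n-1=-(2n+1-2k)$ to absorb the overall sign turns this series into the first summand of \eqref{fpirep0}. Restoring the convergent tail $\int_b^{\infty}$ then yields \eqref{fpirep0}, and substituting \eqref{fpirep0} into the Main Theorem formula \eqref{zetafpirep} gives the representation \eqref{zetaoddrep0} at once.

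The one step that genuinely needs attention is the term-by-term integration for general $b$, and this is precisely where the constraint $b<\pi$ does its work. The series \eqref{expansioncsch} has radius of convergence $\pi$, so on the compact interval $[\epsilon,b]\subset(-\pi,\pi)$ it converges uniformly and the interchange of summation and integration is justified---identically to the $b=1$ case already treated. I do not expect any new difficulty beyond bookkeeping: the hypothesis $b<\pi$ is not an artifact of convenience but the same condition---$b$ closer to the origin than the nearest singularity of $f$---under which Lemma~\ref{mainlemma} permits the split and the expansion in the first place.
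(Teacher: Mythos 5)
Your proposal is correct and follows the paper's own proof essentially verbatim: split at $b$ via Lemma~\ref{mainlemma}, expand $1/\sinh t$ using \eqref{expansioncsch} on $[0,b]$ with $b<\pi$, integrate term by term, discard the purely algebraic diverging powers of $\epsilon$, and substitute the result into the Main Theorem. The only slight inaccuracy is your closing remark: Lemma~\ref{mainlemma} licenses the split for \emph{any} positive $b$, and the restriction $b<\pi$ is needed only so that the expansion \eqref{expansioncsch} converges uniformly on $[\epsilon,b]$ and the term-by-term integration is valid.
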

\begin{proof} 
By Lemma-\ref{mainlemma}, the finite-part of the divergent integral \eqref{divergentsinh} can be expressed as
    \begin{equation}\label{fpirep11}
	\bbint{0}{\infty} \frac{\mathrm{d}t}{t^{2n+1}\sinh t} = \bbint{0}{b}\frac{\mathrm{d}t}{t^{2n+1}\sinh t}+\int_{b}^{\infty} \frac{\mathrm{d}t}{t^{2n+1}\sinh t},
\end{equation}	
for any positive number $b$. If $b<\pi$, then we can replace $1/\sinh t$ in the first term of \eqref{fpirep11} with its expansion \eqref{expansioncsch} to allow us to perform term-by-term integration and obtain the desired finite-part,
\begin{equation}\label{fpilesspi}
    \bbint{0}{b}\frac{{\rm d}t}{t^{2n+1}\sinh t} = \sum_{k=0}^{\infty} \frac{(2^{2k}-2)B_{2k} b^{2k-2n-1}}{(2k)! (2n+1-2k)},\;\; 0<b<\pi .
\end{equation}
Substituting \eqref{fpilesspi} back into \eqref{fpirep11} gives the finite-part integral \eqref{fpirep0}. Finally, introducing the finite-part \eqref{fpirep0} back into the Main Theorem, we obtain the representation \eqref{zetaoddrep0}. 
\end{proof}

\section{Integral Representations on the Real Axis}\label{proofTheorem2}
The finite-part of a divergent integral can be expressed as a convergent integral which is a consequence of the fact that it can be represented as a contour integral in the complex plane. However, in this Section, we derive integral representations of the finite-part of the divergent integral \eqref{divergentsinh}  entirely in the real line (without the use of complex analysis) and, consequently, 
produce integral representations for $\zeta(2n+1)$ along the positive real axis. We extract the finite-part by employing two methods and obtain two corresponding integral representations for $\zeta(2n+1)$.

The first integral representation for $\zeta(2n+1)$ arises from extracting the finite-part using the splitting-property of the finite-part integral established in Lemma-\ref{mainlemma}.
\begin{theorem}
For every positive integer $n$, the finite-part of the divergent integral $\int_0^{\infty}{\rm d}t/t^{2n+1} \sinh t$ is given by
    \begin{equation}\label{hanzan2}
\begin{split}
    \bbint{0}{\infty} \frac{\mathrm{d}t}{t^{2n+1}\sinh t} =\int_0^{\infty}\frac{1}{t^{2n+1}} \left(\frac{1}{\sinh t}+\sum_{k=0}^n \frac{(2^{2k}-2) B_{2k}}{(2k)!} t^{2k-1}\right)\mathrm{d}t ,
    \end{split}
\end{equation}
and %so that 
the zeta function at odd positive integers thereby assumes the integral representation
\begin{equation}\label{fpirepx2}
		\begin{split}
		&\zeta(2n+1) = (-1)^{n+1} \frac{(2\pi)^{2n}}{(2^{2n}-1)} \int_0^{\infty} \left(\frac{1}{\sinh(t)  }+\sum_{k=0}^n \frac{(2^{2k}-2) B_{2k}}{(2k)!} t^{2k-1}\right)\,\frac{\mathrm{d}t}{t^{2n+1}}.
		\end{split}
	\end{equation}
\end{theorem}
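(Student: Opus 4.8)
The plan is to show that the convergent integral on the right of \eqref{hanzan2} reproduces the $b$-dependent expression \eqref{fpirep0}, which the preceding Theorem has already identified with the finite-part. Write $P_n(t)=\sum_{k=0}^n \frac{(2^{2k}-2)B_{2k}}{(2k)!}t^{2k-1}$ for the subtracted polynomial, so that the integrand in \eqref{hanzan2} is $t^{-2n-1}\bigl(\operatorname{csch}t+P_n(t)\bigr)$. The first task is to verify that this integrand is genuinely integrable on $(0,\infty)$. Near the origin the expansion \eqref{expansioncsch} gives $\operatorname{csch}t+P_n(t)=-\sum_{k=n+1}^{\infty}\frac{(2^{2k}-2)B_{2k}}{(2k)!}t^{2k-1}=O(t^{2n+1})$, so that the integrand is $O(1)$ and hence integrable at $t=0$; at infinity $\operatorname{csch}t$ decays exponentially while the surviving polynomial contribution $t^{-2n-1}P_n(t)$ is dominated by its leading $k=n$ term of order $t^{-2}$, so the integrand is integrable at infinity as well.

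Having established that the right-hand integral converges, I would split it at an arbitrary interior point $b$ with $0<b<\pi$, using ordinary additivity of convergent integrals,
\begin{equation*}
\int_0^{\infty}\frac{\operatorname{csch}t+P_n(t)}{t^{2n+1}}\,\mathrm{d}t=\int_0^{b}\frac{\operatorname{csch}t+P_n(t)}{t^{2n+1}}\,\mathrm{d}t+\int_b^{\infty}\frac{\operatorname{csch}t+P_n(t)}{t^{2n+1}}\,\mathrm{d}t.
\end{equation*}
Splitting at an interior point is the crucial structural move: it is only away from the origin that $\operatorname{csch}t$ and $P_n(t)$ may be separated, since each of $t^{-2n-1}\operatorname{csch}t$ and $t^{-2n-1}P_n(t)$ is individually non-integrable at $t=0$. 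On $[0,b]$ I would insert the series identity $\operatorname{csch}t+P_n(t)=-\sum_{k=n+1}^{\infty}\frac{(2^{2k}-2)B_{2k}}{(2k)!}t^{2k-1}$ and integrate term by term, which is justified by the uniform convergence of \eqref{expansioncsch} on $[0,b]\subset(-\pi,\pi)$; each resulting power integral $\int_0^b t^{2k-2n-2}\,\mathrm{d}t$ converges at the origin because $k\geq n+1$, and the outcome is exactly the tail $\sum_{k=n+1}^{\infty}\frac{(2^{2k}-2)B_{2k}b^{2k-2n-1}}{(2k)!(2n+1-2k)}$.

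On $[b,\infty)$ the two summands are separately integrable, so I would write the integral as $\int_b^{\infty}t^{-2n-1}\operatorname{csch}t\,\mathrm{d}t+\int_b^{\infty}t^{-2n-1}P_n(t)\,\mathrm{d}t$ and evaluate the finite collection of elementary power integrals $\int_b^{\infty}t^{2k-2n-2}\,\mathrm{d}t=b^{2k-2n-1}/(2n+1-2k)$ for $k=0,\dots,n$, obtaining the head $\sum_{k=0}^{n}\frac{(2^{2k}-2)B_{2k}b^{2k-2n-1}}{(2k)!(2n+1-2k)}$ together with the leftover regular integral $\int_b^{\infty}t^{-2n-1}\operatorname{csch}t\,\mathrm{d}t$. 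Adding the two subintervals merges the head and tail sums into the single series $\sum_{k=0}^{\infty}\frac{(2^{2k}-2)B_{2k}b^{2k-2n-1}}{(2k)!(2n+1-2k)}$, so the right-hand side of \eqref{hanzan2} reproduces exactly the right-hand side of \eqref{fpirep0}; by the preceding Theorem this equals the finite-part, establishing \eqref{hanzan2}. Multiplying through by the prefactor $(-1)^{n+1}(2\pi)^{2n}/(2^{2n}-1)$ and invoking Theorem~\ref{maintheorem} then yields the representation \eqref{fpirepx2}. I expect no genuine obstacle here; the only points demanding care are the integrability of the regularized integrand at both endpoints and the interior splitting that legitimizes separating $\operatorname{csch}t$ from $P_n(t)$, both of which are handled above.
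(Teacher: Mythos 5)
Your proof is correct and follows essentially the same route as the paper: both rest on the parametrized representation \eqref{fpirep0}, the head/tail split of the Bernoulli series at $k=n$, the elementary power integrals, and the expansion \eqref{expansioncsch} to recombine the tail into $\operatorname{csch}t+P_n(t)$. The only difference is direction and parametrization — the paper works forward from \eqref{fpirep0} at $b=1$ to assemble the single integral, while you start from the convergent integral (after checking integrability at both endpoints, a point the paper leaves implicit) and decompose it back into \eqref{fpirep0} for general $b<\pi$ — which is a cosmetic reversal rather than a different argument.
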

\begin{proof}
Specializing the representation \eqref{fpirep0} to $b=1$, the finite-part integral takes the form
\begin{equation}\label{fpirepx1}
    \bbint{0}{\infty} \frac{\mathrm{d}t}{t^{2n+1}\sinh t} = \sum_{k=0}^{\infty} \frac{(2^{2k}-2) B_{2k}}{(2k)! (2n+1-2k)} +  \int_1^{\infty} \frac{\mathrm{d}t}{t^{2n+1}\sinh t}.
\end{equation}
We rewrite the infinite sum in equation \eqref{fpirepx1} by splitting it in two parts,
\begin{equation} \label{split}
\sum_{k=0}^{\infty} \frac{(2^{2k}-2) B_{2k}}{(2k)! (2n+1-2k)} = \sum_{k=0}^{n} \frac{(2^{2k}-2) B_{2k}}{(2k)! (2n+1-2k)}-\sum_{k=n+1}^{\infty} \frac{(2^{2k}-2) B_{2k}}{(2k)! (2k-2n-1)}, 
\end{equation}
with the intention to recast both terms as integrals. 

To recast the finite sum in the first term, we implement the substitution
\begin{equation}
    \frac{1}{(2n+1-2k)} = \int_1^{\infty} \frac{\mathrm{d}t}{t^{2n-2k+2}}, \;\; k=0,\dots n,
\end{equation}
to obtain
\begin{equation}\label{term1}
    \sum_{k=0}^n \frac{(2^{2k}-2) B_{2k}}{(2k)! (2n+1-2k)} = \int_1^{\infty}\sum_{k=0}^n \frac{(2^{2k}-2) B_{2k}}{(2k)!}t^{2k-1}\frac{{\rm d}t}{t^{2n+1}} .
\end{equation}

On the other hand, to rewrite the infinite series in the second term, we make a similar substitution,
\begin{equation}\label{boko}
    \frac{1}{(2k-2n-1)}=\int_0^1 t^{2k-2n-2}\mathrm{d}t,\;\; k=(n+1), (n+2),\dots 
\end{equation}
in the sum. Upon substituting \eqref{boko} back into the second term, we pull out the integral and add a zero to the infinite series so that the second term becomes,
\begin{equation}
\begin{split}
    -\sum_{k=n+1}^{\infty} \frac{(2^{2k}-2) B_{2k}}{(2k)! (2k-2n-1)}=&\int_0^1 \frac{1}{t^{2n+1}} \left[- \sum_{k=n+1}^{\infty} \frac{(2^{2k}-2) B_{2k}}{(2k)!} t^{2k-1}\right.\\
    &\hspace{-18mm}\left.-\sum_{k=0}^n \frac{(2^{2k}-2) B_{2k}}{(2k)!} t^{2k-1}
    +\sum_{k=0}^n \frac{(2^{2k}-2) B_{2k}}{(2k)!} t^{2k-1} \right]\mathrm{d}t .
    \end{split}
\end{equation}
The first two terms inside the brackets in the right side combine to give $1/\sinh t$ in accordance with the expansion \eqref{expansioncsch} so that 
\begin{equation}\label{term2}
\begin{split}
    -\sum_{k=n+1}^{\infty} \frac{(2^{2k}-2) B_{2k}}{(2k)! (2k-2n-1)}=\int_0^1\frac{1}{t^{2n+1}} \left(\frac{1}{\sinh t}+\sum_{k=0}^n \frac{(2^{2k}-2) B_{2k}}{(2k)!} t^{2k-1}\right)\mathrm{d}t .
    \end{split}
\end{equation}

We return \eqref{term1} and \eqref{term2} back into \eqref{split}, and the result back to \eqref{fpirepx2}. We find that all terms combine into a single integral, giving the representation \eqref{hanzan2} for the finite-part integral representation. 

Substituting the representation \eqref{hanzan2}
for the finite-part integral back into the Main Theorem yields the integral representation \eqref{fpirepx2} for the zeta function at odd positive integers.
\end{proof}

The next integral represenation for $\zeta(2n+1)$ arises from extracting the finite-part by means of successive integration by parts. 
\begin{theorem}\label{zetaoddorig}
For every positive integer $n$, the finite-part of the divergent integral $\int_0^{\infty}{\rm d}t/t^{2n+1} \sinh t$ is given by
    \begin{equation}\label{fpider}
		\bbint{0}{\infty} \frac{\mathrm{d}t}{t^{2n+1}\sinh t} = \frac{1}{(2n+1)!} \int_{0}^{\infty} \frac{\mathrm{d}^{2n+1}}{\mathrm{d}t^{2n+1}}\left(\frac{t}{\sinh t}\right)\frac{\mathrm{d}t}{t} .
	\end{equation}
and the zeta function at odd positive integers thereby assumes the integral representation
\begin{equation}\label{zetaoddintegrep1}
\zeta(2n+1) =(-1)^{n+1} \frac{(2\pi)^{2n}}{(2^{2n}-1) (2n+1)!} \int_0^{\infty} \frac{\mathrm{d}^{2n+1}}{\mathrm{d}t^{2n+1}}\left(\frac{t}{\sinh t}\right)\,\frac{\mathrm{d}t}{t} .
\end{equation}
\end{theorem}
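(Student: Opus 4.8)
The plan is to write the finite-part integrand as $f(t)/t^{2n+2}$ with $f(t)=t/\sinh t$, and to extract the finite-part by integrating the truncated integral $\int_\epsilon^\infty$ by parts exactly $2n+1$ times. Each integration by parts trades one power of $t^{-1}$ for one derivative on $f$, so after $2n+1$ steps the surviving integral is $\int_\epsilon^\infty f^{(2n+1)}(t)\,t^{-1}\mathrm{d}t$, precisely the integral on the right of \eqref{fpider}. The two facts that make this work are that $f$ is analytic and even with $f(0)=1$ (via the expansion \eqref{expansioncsch}), and that $f$ together with all its derivatives decays exponentially at infinity, since $t/\sinh t\sim 2t\,e^{-t}$; hence every boundary term at $t=\infty$ vanishes and all the intermediate integrals over $[\epsilon,\infty)$ are genuine convergent integrals, so each integration by parts is legitimate.

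Concretely, I would set $I_m=\int_\epsilon^\infty f^{(m)}(t)\,t^{-(2n+2-m)}\mathrm{d}t$ for $m=0,1,\dots,2n+1$, so that $I_0$ is the truncation of the divergent integral and $I_{2n+1}$ is the target. Integrating $t^{-(2n+2-m)}$ and differentiating $f^{(m)}$ gives, with vanishing boundary term at infinity, the recursion
\begin{equation*}
I_m=\frac{f^{(m)}(\epsilon)}{(2n+1-m)\,\epsilon^{2n+1-m}}+\frac{1}{2n+1-m}\,I_{m+1},\qquad m=0,1,\dots,2n.
\end{equation*}
Unrolling this from $m=0$ to $m=2n$ collapses the product of the coefficients $1/(2n+1-m)$ into $1/(2n+1)!$ and yields
\begin{equation*}
I_0=\sum_{m=0}^{2n}\frac{(2n-m)!}{(2n+1)!}\,\frac{f^{(m)}(\epsilon)}{\epsilon^{2n+1-m}}+\frac{1}{(2n+1)!}\,I_{2n+1}.
\end{equation*}

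The remaining task is to separate the convergent part $C_\epsilon$ from the divergent part $D_\epsilon$ and pass to the limit. Because $f$ is even, its Taylor series contains only even powers, so expanding $f^{(m)}(\epsilon)$ shows the $m$-th boundary term to be a series in $\epsilon^{2j-2n-1}$; every such exponent is odd, hence never zero. Consequently the entire boundary sum carries only odd powers of $\epsilon$: the negative ones are pure diverging algebraic powers that by definition are discarded into $D_\epsilon$, while the positive ones belong to $C_\epsilon$ but tend to $0$. In particular no $\epsilon^0$ term and no $\ln\epsilon$ survive. Meanwhile $f^{(2n+1)}$ is odd, so $f^{(2n+1)}(t)/t$ is bounded at the origin and $I_{2n+1}\to\int_0^\infty f^{(2n+1)}(t)\,t^{-1}\mathrm{d}t$. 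Taking $\epsilon\to0$ therefore leaves $\lim_{\epsilon\to0}C_\epsilon=\tfrac{1}{(2n+1)!}\int_0^\infty f^{(2n+1)}(t)\,t^{-1}\mathrm{d}t$, which is \eqref{fpider}; substituting this into the Main Theorem~\ref{maintheorem} gives \eqref{zetaoddintegrep1}.

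The step I expect to be the crux is the last one: guaranteeing that the accumulated boundary terms contribute nothing finite to the finite-part. This is not automatic for a general $f$, where an $\epsilon^0$ term could appear and leak a constant into $\lim_{\epsilon\to0}C_\epsilon$; here it is precisely the parity of $t/\sinh t$ that forces all boundary powers to be odd and hence non-constant, so the finite-part is captured cleanly by the single convergent integral $I_{2n+1}$. A secondary point to verify is the convergence of $\int_0^\infty f^{(2n+1)}(t)\,t^{-1}\mathrm{d}t$ at both endpoints, which follows from the oddness of $f^{(2n+1)}$ at the origin and the exponential decay at infinity.
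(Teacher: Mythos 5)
Your proposal is correct and follows essentially the same route as the paper: truncate the integral at $\epsilon$, integrate by parts $2n+1$ times with vanishing boundary terms at infinity, and use the evenness of $t/\sinh t$ (via the expansion \eqref{expansioncsch}) to show that the accumulated boundary terms at $\epsilon$ carry only odd powers of $\epsilon$ and hence contribute nothing to the finite part, while $f^{(2n+1)}(t)/t$ is integrable at the origin. Your explicit coefficient $(2n-m)!/(2n+1)!$ agrees with the paper's $1/\left[(2n-k+1)(2n-k+2)_k\right]$, so the two arguments coincide.
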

\begin{proof} 
    For some positive $\epsilon$, we evaluate the integral
\begin{equation}
	\int_{\epsilon}^{\infty} \frac{\mathrm{d}t}{t^{2n+1}\sinh t} = \int_{\epsilon}^{\infty} \left(\frac{t}{\sinh t}\right) \frac{\mathrm{d}t}{t^{2n+2}} .
\end{equation}
Repeated $2n$-integration by parts yields
\begin{equation}\label{resultintegbyparts}
	\begin{split}
		\int_{\epsilon}^{\infty} \frac{\mathrm{d}t}{t^{2n+1}\sinh t} =& \sum_{k=0}^{2n} \frac{\epsilon^{k-2n-1}}{(2n-k+1) (2n-k+2)_k} \frac{\mathrm{d}^{k}}{\mathrm{d}\epsilon^k}\left(\frac{\epsilon}{\sinh \epsilon}\right)\\
		& + \frac{1}{(2n+1)!} \int_{\epsilon}^{\infty} \frac{\mathrm{d}^{2n+1}}{\mathrm{d}t^{2n+1}}\left(\frac{t}{\sinh t}\right)\frac{\mathrm{d}t}{t},
		\end{split}
\end{equation}
where the boundary term at infinity has vanished. 

We now identify the diverging and converging terms. From the expansion \eqref{expansioncsch}, we have for arbitrarily small $\epsilon$,
\begin{equation}
    \epsilon^k \frac{{\rm d}^k}{{\rm d}\epsilon^k}\left(\frac{\epsilon}{\sinh \epsilon}\right)=\sum_{l=\lceil k/2\rceil}^{\infty}\frac{(2^{2l}-2)B_{2l}}{(2l)!} \epsilon^{2l},\;\; k=0,\dots, 2n ,
\end{equation}
Then each term in the summation in \eqref{resultintegbyparts} either diverges or vanishes as $\epsilon\rightarrow 0$, with the first vanishing term $O(\epsilon)$. On the other hand, the integral converges in the limit because %expansion
\begin{equation}
	\frac{\mathrm{d}^{2n+1}}{\mathrm{d}t^{2n+1}}\left(\frac{t}{\sinh t}\right) = O(t), \;\; t\rightarrow 0,
\end{equation}
which is again a consequence of the expansion \eqref{expansioncsch}. Then the converging part is given by
\begin{equation}
    C_{\epsilon}=\frac{1}{(2n+1)!} \int_{\epsilon}^{\infty} \frac{\mathrm{d}^{2n+1}}{\mathrm{d}t^{2n+1}}\left(\frac{t}{\sinh t}\right)\frac{\mathrm{d}t}{t} + O(\epsilon) .
\end{equation}
From the definition of the finite-part integral \eqref{fpidefinitioninf}, we finally obtain the integral representation \eqref{fpider} for the finite-part integral.

Substituting the representation \eqref{fpider} of the finite-part integral back into Theorem-\ref{maintheorem} yields the integral representation \eqref{zetaoddintegrep1} for the zeta function at odd positive integers.
\end{proof}

\begin{proof}[Remark]Of course the integral representations \eqref{hanzan2} and \eqref{fpider} for the finite-part integral are equal and may be transformed into each other, as with other representations. Representation \eqref{hanzan2} can be obtained from \eqref{fpider} by repeated integration by parts until all derivatives have been integrated away. Similarly, representation \eqref{fpider} from \eqref{hanzan2} can be obtained by repeated integration by parts until the factor $t^{-2n-1}$ has been reduced to $t^{-1}$. Both representations are on equal footing in the sense that they arise directly from the definition of the finite-part, but the insights that they may bring may be different because of the different rates of convergence of the integrals in the two representations. For example, the integrand in \eqref{hanzan2} vanishes algebraically at infinity while the integrand in \eqref{fpider} vanishes exponentially there. The rate of convergence of the infinite series representation of $\zeta(3)$ of Apery played a crucial rule in the proof of the irrationality of $\zeta(3)$. The same may be expected of integral representations that their rate of convergence may play a role in discovering properties of the zeta function at odd positive integers.\end{proof}

Now either \eqref{fpirepx2} or \eqref{zetaoddintegrep1} can be a starting point in obtaining more representations for the zeta function $\zeta(2n+1)$. %with different convergence properties. 
For example, we can use Laplace transform to recast \eqref{zetaoddintegrep1} into the following integral representation. 
\begin{corollary} 
The zeta function at odd positive integers assumes the integral representation
\begin{equation}\label{reprepzeta}   
\begin{split}
&\zeta(2n+1) =\frac{(-1)^{n+1} (2\pi)^{2n}}{(2^{2n}-1)(2n+1)!} \\
&\hspace{7mm}\times\int_0^{\infty} \left[\frac{1}{2}s^{2n+1} \zeta\!\left(2,\frac{s+1}{2}\right)+\sum_{l=0}^n (2^{2l}-2) B_{2l} \, s^{2n-2l}\right]\mathrm{d}s,
\end{split}
\end{equation}
for $n=1, 2,\dots$, where $\zeta(2,z)$ is a particular Hurwitz zeta function.
\end{corollary}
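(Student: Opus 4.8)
The plan is to recast the single integral in \eqref{zetaoddintegrep1} as an iterated integral by introducing a Laplace transform, exploiting the elementary identity $1/t=\int_0^\infty e^{-st}\,\mathrm{d}s$. Write $g(t)=\frac{\mathrm{d}^{2n+1}}{\mathrm{d}t^{2n+1}}\!\left(t/\sinh t\right)$. As already noted in the proof of Theorem-\ref{zetaoddorig}, the expansion \eqref{expansioncsch} gives $g(t)=O(t)$ as $t\to 0$, while $t/\sinh t$ and all its derivatives decay exponentially as $t\to\infty$; hence $g(t)/t$ is absolutely integrable on $(0,\infty)$. Applying Tonelli's theorem to $|g(t)|\,e^{-st}$ (whose double integral equals $\int_0^\infty |g(t)|/t\,\mathrm{d}t<\infty$) licenses the interchange of the order of integration, so that
\[
\int_0^\infty \frac{g(t)}{t}\,\mathrm{d}t=\int_0^\infty \mathcal{L}[g](s)\,\mathrm{d}s,\qquad \mathcal{L}[g](s):=\int_0^\infty g(t)\,e^{-st}\,\mathrm{d}t .
\]
This simultaneously produces the $s$-integral appearing in \eqref{reprepzeta} and guarantees its convergence.

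The next step is to evaluate $\mathcal{L}[g](s)$ in closed form. Since $g=f^{(2n+1)}$ with $f(t)=t/\sinh t$, I would invoke the Laplace transform of a derivative, $\mathcal{L}[f^{(m)}](s)=s^m\mathcal{L}[f](s)-\sum_{j=0}^{m-1}s^{m-1-j}f^{(j)}(0)$, taken with $m=2n+1$; the boundary contributions at infinity vanish because $f$ and its derivatives decay exponentially. For the transform of $f$ itself I would use the geometric expansion $1/\sinh t=2\sum_{k\ge 0}e^{-(2k+1)t}$ and integrate term by term to obtain $\mathcal{L}[f](s)=2\sum_{k=0}^\infty (s+2k+1)^{-2}=\tfrac12\,\zeta\!\big(2,\tfrac{s+1}{2}\big)$, the Hurwitz zeta emerging after rescaling the summation index. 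Multiplying by $s^{2n+1}$ yields the first term $\tfrac12 s^{2n+1}\zeta(2,(s+1)/2)$ of \eqref{reprepzeta}.

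For the boundary terms I would read off the Taylor coefficients of $f$ directly from \eqref{expansioncsch}: multiplication by $t$ gives $f(t)=-\sum_{k\ge0}\frac{(2^{2k}-2)B_{2k}}{(2k)!}t^{2k}$, which is even, so the odd-order derivatives vanish at the origin while $f^{(2l)}(0)=-(2^{2l}-2)B_{2l}$. Only the even indices $j=2l$ with $0\le l\le n$ survive in the boundary sum, and the two sign reversals combine to reproduce exactly $\sum_{l=0}^n(2^{2l}-2)B_{2l}\,s^{2n-2l}$. Assembling $\mathcal{L}[g](s)$ from these two pieces, substituting into the Laplace identity above, and returning to \eqref{zetaoddintegrep1} then delivers \eqref{reprepzeta}.

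The main obstacle I anticipate is analytic bookkeeping rather than any conceptual difficulty: one must carefully verify the absolute integrability that underwrites the Tonelli/Fubini interchange and confirm the vanishing of every boundary term at infinity in the iterated derivative-transform formula. A secondary point worth flagging is that, although each summand of the final integrand grows like $s^{2n}$ as $s\to\infty$, the leading powers cancel between the two contributions (the $l=0$ Bernoulli term $-s^{2n}$ against the leading large-argument asymptotics of $\tfrac12 s^{2n+1}\zeta(2,(s+1)/2)$), so that the integral in \eqref{reprepzeta} is genuinely convergent. This cancellation is automatic from the Fubini identity, but it is reassuring to confirm it directly through the Euler--Maclaurin expansion of the Hurwitz zeta at large argument.
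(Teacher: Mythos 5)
Your proposal is correct and follows essentially the same route as the paper: insert $1/t=\int_0^\infty e^{-st}\,\mathrm{d}s$, interchange the integrals, apply the Laplace transform of a derivative, and read off $f^{(2l)}(0)=-(2^{2l}-2)B_{2l}$ from \eqref{expansioncsch}. The only cosmetic differences are that you derive $\mathcal{L}[t/\sinh t]=\tfrac12\zeta(2,(s+1)/2)$ from the geometric series rather than citing the table entry, and you justify the interchange by Tonelli applied to $|g(t)|e^{-st}$ up front, whereas the paper's Appendix instead verifies convergence of the resulting $s$-integral via the large-argument asymptotics of the Hurwitz zeta function.
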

\begin{proof}
In the integral representation \eqref{zetaoddintegrep1}, we make the substitution
    \begin{equation}
        \frac{1}{t}=\int_0^{\infty} e^{-s t}\,\mathrm{d}t,\;\;\; s>0 .
    \end{equation}
    in the integral and interchange the order of integrations. The justification of the interchange is given in the Appendix. The substitution and interchange lead to the equality
    \begin{equation}
        \int_0^{\infty} \frac{\mathrm{d}^{2n+1}}{\mathrm{d}t^{2n+1}}\left(\frac{t}{\sinh t}\right)\,\frac{\mathrm{d}t}{t} = \int_0^{\infty}\int_0^{\infty}\frac{\mathrm{d}^{2n+1}}{\mathrm{d}t^{2n+1}}\left(\frac{t}{\sinh t}\right) e^{-s t}\,\mathrm{d}t\, \mathrm{d}s .
    \end{equation}
    The inner integral is a Laplace transform of the derivative of a function. Using known properties of the Laplace transform, we arrive at
    \begin{equation}
    \begin{split}\int_0^{\infty}\frac{\mathrm{d}^{2n+1}}{\mathrm{d}t^{2n+1}}\left(\frac{t}{\sinh t}\right) e^{-s t}\,\mathrm{d}t = & s^{2n+1}\int_0^{\infty} \frac{t}{\sinh t}\, e^{-s t}\mathrm{d}t \\
    &\hspace{6mm}- \sum_{l=1}^{2n+1} s^{2n+1-l}\left. \frac{\mathrm{d}^{l-1}}{\mathrm{d}t^{l-1}}\left(\frac{t}{\sinh t}\right)\right|_{t=0^+} .
    \end{split}
    \end{equation}
    
The right hand side can now be readily evaluated. The remaining integral in the right hand side is tabulated and is given by \cite[p.389,\#3.552.1]{gr}
    \begin{equation}\label{laplace}
        \int_0^{\infty} \frac{t}{\sinh t}\, e^{-s t}\,\mathrm{d}t =\frac{1}{2} \zeta\!\left(2,\frac{s+1}{2}\right),
    \end{equation}
which is valid for $\operatorname{Re}s>-1$. On the other hand, the derivatives are evaluated by means of the expansion \eqref{expansioncsch}. Clearly the odd derivatives vanish at the origin and only the even derivatives contribute, 
    \begin{equation}\label{derivatives}
        \left.\frac{\mathrm{d}^{2r}}{\mathrm{d}t^{2r}}\left(\frac{t}{\sinh t}\right)\right|_{t=0} = - (2^{2r}-2) B_{2r},\;\;\; r=1, 2, 3, \dots . 
    \end{equation}
Substituting the Laplace transform \eqref{laplace} and the derivatives \eqref{derivatives} back into equation, we obtain the equality
    \begin{equation}\label{reprep}
    \begin{split}
        &\int_0^{\infty} \frac{\mathrm{d}^{2n+1}}{\mathrm{d}t^{2n+1}}\left(\frac{t}{\sinh t}\right)\,\frac{\mathrm{d}t}{t}\\
        &\hspace{16mm}= \int_0^{\infty} \left[\frac{1}{2}s^{2n+1} \zeta\left(2,\frac{s+1}{2}\right)+\sum_{l=0}^n (2^{2l}-2) B_{2l} s^{2n-2l}\right]\mathrm{d}s .
        \end{split}
    \end{equation} 
Substituting \eqref{reprep} back into \eqref{zetaoddintegrep1} yields the integral representation \eqref{reprepzeta}. 
\end{proof}

%%%%%%%%%%%%%%%%%%%%%%%%%%%%%%%%%%%%%%%%%%%%%%%%%%%%%%%%%
%%%%%%%%%%%%%%%%%%%%%%%%%%%%%%%%%%%%%%%%%%%%%%%%%%%%%%%%%
\begin{figure}[t]\label{fig:fpicontour}
    \centering
	\includegraphics[scale=0.35]{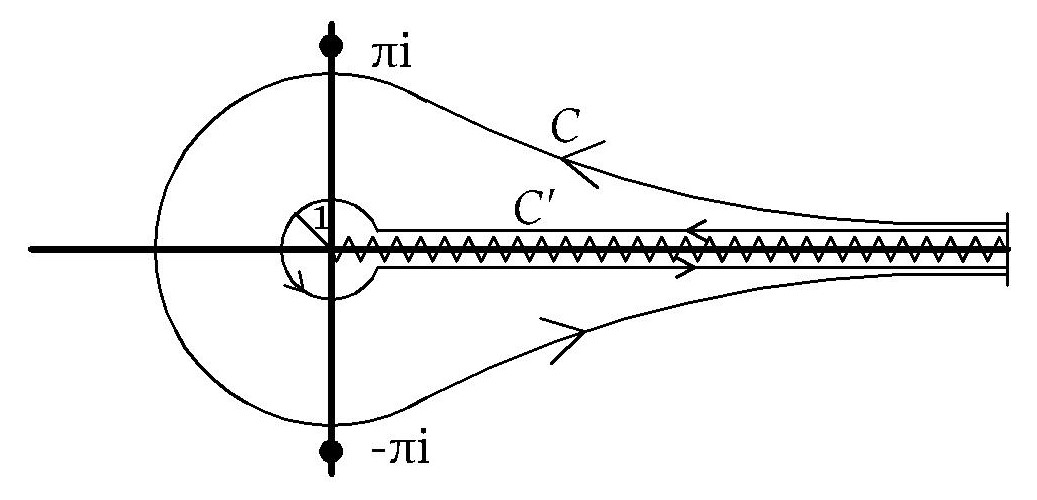}
	\caption{The contour $C$ of integration for the contour integral representation of $\zeta(2n+1)$. The contour $C'$ is a deformation of $C$.}
\end{figure}
%%%%%%%%%%%%%%%%%%%%%%%%%%%%%%%%%%%%%%%%%%%%%%%%%%%%%%%%%
%%%%%%%%%%%%%%%%%%%%%%%%%%%%%%%%%%%%%%%%%%%%%%%%%%%%%%%%%

\section{Contour Integral Representation}\label{proofContourRep}
The finite-part integral \eqref{fpidefinition} admits a contour integral representation which depends on the nature of the non-integrable singularity at origin, whether the singularity is a pole or a branch point. For the divergent integral \eqref{divergentsinh}, we have a pole at the origin and the relevant contour integral representation, adapted from \cite{galapon1} to our present needs, is given by the following result.  
\begin{lemma} Let $f(z)$ be analytic in the positive real line, and let its restriction there be $f(t)$ with $f(0)\neq 0$. If $f(t)t^{-m}$ is integrable at infinity for some positive integer $m$, then the finite-part of the divergent integral $\int_0^{\infty} t^{-m} f(t)\mathrm{d}t$ admits the contour integral representation
\begin{equation}\label{fpicontourrep}
	\bbint{0}{\infty}\frac{f(t)}{t^m}\,\mathrm{d}t = \frac{1}{2\pi i} \int_{\infty}^{(0^+)} \frac{f(z)}{z^m} \left(\log z - i\pi\right)\,\mathrm{d}z,
\end{equation}
where $\log z$ takes the positive real axis as its branch cut, and the contour straddles the branch cut starting from the positive infinity on top of the cut, going around the origin and then going back to the positive infinity below the cut, with the contour not enclosing any of the singularities of $f(z)$. 
\end{lemma}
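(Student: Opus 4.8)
The plan is to evaluate the right-hand contour integral directly by collapsing the contour onto the branch cut and to verify that the result reproduces the finite-part defined in Section~\ref{fpidiscussion}. First I would give the small loop around the origin a definite radius $\delta$, chosen smaller than both the radius of convergence of the Taylor series of $f$ at $0$ and the distance from the origin to the nearest singularity of $f$. The contour $C$ then consists of an upper edge running from $+\infty$ inward to $z = \delta$, the circle $|z| = \delta$ traversed counterclockwise from argument $0$ to $2\pi$, and a lower edge running back out to $+\infty$. With $\log z$ cut along the positive real axis, its boundary values are $\log z = \log t$ on the upper edge and $\log z = \log t + 2\pi i$ on the lower edge, so the weight $\log z - i\pi$ equals $\log t - i\pi$ above the cut and $\log t + i\pi$ below it.

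The simplification I would exploit next is that, once the two oppositely oriented edges are combined, the $\log t$ contributions cancel and only the constant $\pm i\pi$ survives, giving
\begin{equation}
\frac{1}{2\pi i}\left(\int_{\mathrm{upper}} + \int_{\mathrm{lower}}\right) \frac{f(z)}{z^m}\left(\log z - i\pi\right)\mathrm{d}z = \int_\delta^\infty \frac{f(t)}{t^m}\,\mathrm{d}t .
\end{equation}
This cancellation is precisely what makes the combined edge integral converge at infinity under the stated hypothesis that $f(t)t^{-m}$ is integrable there, even though each edge separately carries a growing logarithmic factor. It remains to compute the loop contribution, which I would obtain by inserting the Taylor expansion $f(z) = \sum_{k\geq 0} a_k z^k$ with $a_0 = f(0) \neq 0$, parametrizing $z = \delta e^{i\theta}$, and integrating term by term, legitimate because the series converges uniformly on $|z| = \delta$. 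The only elementary integrals required are $\int_0^{2\pi} e^{in\theta}\,\mathrm{d}\theta$ and $\int_0^{2\pi}\theta\, e^{in\theta}\,\mathrm{d}\theta$ with $n = k - m + 1$; the former vanishes unless $n = 0$ while the latter contributes a factor $1/n$, yielding
\begin{equation}
\frac{1}{2\pi i}\int_{|z|=\delta}\frac{f(z)}{z^m}\left(\log z - i\pi\right)\mathrm{d}z = a_{m-1}\log\delta + \sum_{k \neq m-1}\frac{a_k\, \delta^{k-m+1}}{k-m+1} .
\end{equation}

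Finally I would assemble the three pieces. Writing $\int_\delta^\infty = \int_\delta^b + \int_b^\infty$ for a $b$ inside the radius of convergence and expanding $\int_\delta^b$ term by term exhibits exactly the same $\delta$-dependent quantities---the powers $\delta^{k-m+1}$ for all $k \neq m-1$ together with $a_{m-1}\log\delta$---as the loop contribution, but with opposite sign. They cancel identically, so the sum of the three pieces is independent of $\delta$ and equals $\sum_{k\neq m-1} a_k b^{k-m+1}/(k-m+1) + a_{m-1}\log b + \int_b^\infty f(t)t^{-m}\,\mathrm{d}t$. Recognizing the first two summands as the term-by-term evaluation of $\bbint{0}{b} f(t)t^{-m}\,\mathrm{d}t$ and applying the splitting property of Lemma~\ref{mainlemma}, this sum is exactly $\bbint{0}{\infty} f(t)t^{-m}\,\mathrm{d}t$, establishing the representation. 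The main obstacle is the bookkeeping of the singular limit: one must track every divergent algebraic power of $\delta$ and the $\log\delta$ term on both the loop and the edges and confirm their exact mutual cancellation, while checking that the surviving $\delta$-independent remainder is indeed the regularized value of Section~\ref{fpidiscussion}. A secondary point demanding care is the assignment of the logarithm's boundary values, since a slip of $2\pi i$ there would destroy both the cancellation of the logarithmic edge terms and the identification of the constant weight $i\pi$.
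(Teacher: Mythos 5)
Your argument is correct, but be aware that the paper itself supplies no proof of this Lemma to compare against: the representation is imported wholesale from reference \cite{galapon1} (``adapted \dots to our present needs''), so your write-up is a self-contained derivation of a cited result rather than an alternative to an in-paper proof. The route you take---collapsing the Hankel contour onto the two edges of the cut plus a circle of radius $\delta$, noting that the $\log t$ parts of the two edges cancel while the $\mp i\pi$ weights add to give exactly $\int_\delta^\infty f(t)t^{-m}\,\mathrm{d}t$, evaluating the loop term by term to obtain $a_{m-1}\ln\delta+\sum_{k\neq m-1}a_k\delta^{k-m+1}/(k-m+1)$, and then cancelling every $\delta$-dependent term against the expansion of $\int_\delta^{b}f(t)t^{-m}\,\mathrm{d}t$ so that what survives is $\bbint{0}{b}f(t)t^{-m}\,\mathrm{d}t+\int_b^\infty f(t)t^{-m}\,\mathrm{d}t$, i.e.\ the finite part by Lemma~\ref{mainlemma}---is essentially the mechanism of the cited source, and your bookkeeping of the boundary values of $\log z$ ($\log t$ above the cut, $\log t+2\pi i$ below, circle traversed counterclockwise from $\theta=0$ to $2\pi$) is the correct one. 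Two points are worth making explicit. First, the hypothesis ``analytic in the positive real line'' must be read, as you implicitly do, as analyticity in a neighbourhood of $[0,\infty)$ that contains the origin; without analyticity at $0$ neither the Taylor expansion nor the shrinking of the loop to radius $\delta$ is available, and this stronger assumption is indeed what the paper imposes on $f$ throughout Section~\ref{fpidiscussion}. Second, since $f(t)t^{-m}\ln t$ need not be integrable at infinity when $f(t)t^{-m}$ is (consider $f(t)t^{-m}\sim 1/(t\ln^2 t)$), each edge integral may diverge separately; you correctly observe that only the combination converges, but to make every intermediate object finite you should truncate both edges at a common finite $R$, combine them, and only then let $R\to\infty$, which is also the sense in which the symbol $\int_{\infty}^{(0^+)}$ must be understood.
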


Thus replacing the finite-part integral with its contour integral representation yields a contour integral representation for the zeta function at odd positive integers.
%%%%%%%%%%%%%%%%%%%%%%%%%%
%%%%%%%%%%%%%%%%%%%%%%%%%%
\begin{figure}[t]\label{fig:deformed1}
    \centering
	\includegraphics[scale=0.4]{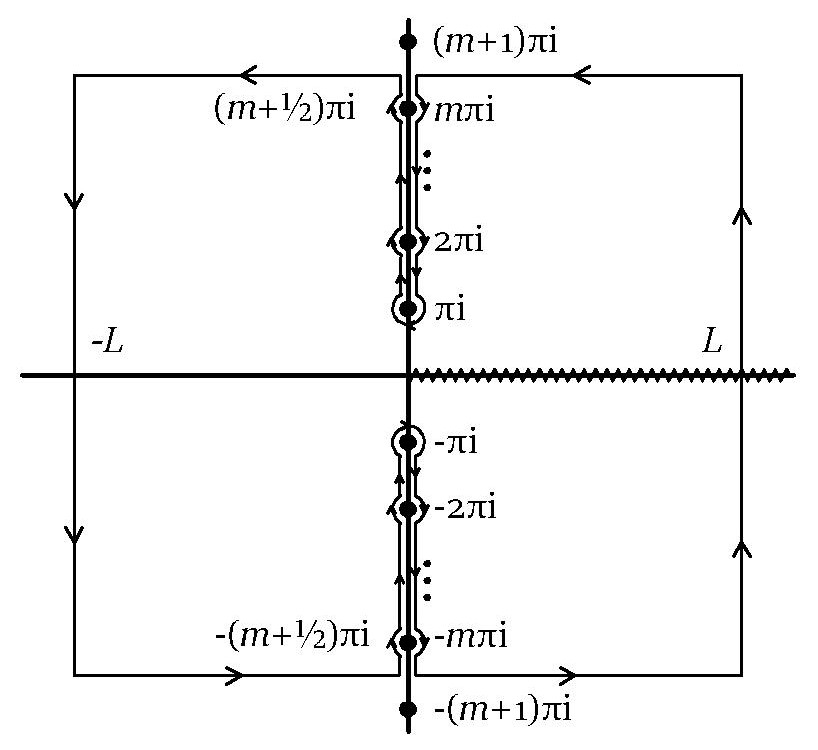}
	\caption{The deformed contour $\Gamma$ of integration to prove the contour integral representation for $\zeta(2n+1)$.}
\end{figure}
%%%%%%%%%%%%%%%%%%%%%%%%%%
%%%%%%%%%%%%%%%%%%%%%%%%%%
\begin{theorem} For every positive integer n,
\begin{equation}\label{zetaoddcrep}
	\zeta(2n+1)=(-1)^{n+1} \frac{(2\pi)^{2n}}{(2^{2n}-2)} \int_{\infty}^{(0+)} \frac{\log z}{z^{2n+1} \sinh z}\,\frac{\mathrm{d}z}{2\pi i} 
\end{equation}
where the contour does not enclose any of the singularities of $z/\sinh z$ which are $z=\pm i\pi, \pm 2i\pi, \pm 3i\pi,\dots$ as shown in Figure-1 denoted by $C$.
\end{theorem}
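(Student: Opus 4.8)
The plan is to read the statement as an immediate consequence of two results already in hand: the Main Theorem \eqref{zetafpirep}, which ties $\zeta(2n+1)$ to the finite part of \eqref{divergentsinh}, and the preceding contour-integral Lemma for finite parts with a pole at the origin. To invoke that Lemma I would first put the divergent integral into the required shape, writing $\int_0^{\infty} t^{-2n-1}\operatorname{csch}t\,\mathrm{d}t=\int_0^{\infty} z^{-(2n+2)}\bigl(z/\sinh z\bigr)\,\mathrm{d}z$ and setting $f(z)=z/\sinh z$, $m=2n+2$. The hypotheses hold: $z/\sinh z$ is analytic in a neighbourhood of the positive axis, $f(0)=1\neq 0$, and $f(t)t^{-m}=t^{-2n-1}\operatorname{csch}t$ decays exponentially and is thus integrable at infinity. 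The Lemma then delivers
\[
\bbint{0}{\infty}\frac{\mathrm{d}t}{t^{2n+1}\sinh t}=\frac{1}{2\pi i}\int_{\infty}^{(0^+)}\frac{\log z-i\pi}{z^{2n+1}\sinh z}\,\mathrm{d}z .
\]

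The next step is to split the integrand into its $\log z$ part and its $-i\pi$ part and to show that the latter is inert. The function $g(z)=1/(z^{2n+1}\sinh z)$ is single valued and meromorphic; moreover it is \emph{even}, since both $z^{2n+1}$ and $\sinh z$ are odd, so its Laurent expansion at the origin carries only even powers of $z$ and in particular has vanishing $z^{-1}$ coefficient. Along the Hankel contour the two straight edges above and below the cut carry the same value of $g$ and therefore cancel, leaving only the small positively oriented loop about the origin, whose value is $2\pi i\,\operatorname{Res}_{z=0}g=0$. Hence
\[
\frac{1}{2\pi i}\int_{\infty}^{(0^+)}\frac{-i\pi}{z^{2n+1}\sinh z}\,\mathrm{d}z=-\frac{1}{2}\int_{\infty}^{(0^+)}\frac{\mathrm{d}z}{z^{2n+1}\sinh z}=0 ,
\]
so the finite part equals $\tfrac{1}{2\pi i}\int_{\infty}^{(0^+)} z^{-2n-1}\operatorname{csch}z\,\log z\,\mathrm{d}z$. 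Feeding this into the Main Theorem and carrying the prefactor through from \eqref{zetafpirep} then produces the stated representation, with the contour arranged so as not to enclose the poles of $z/\sinh z$ at $z=\pm ik\pi$.

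The step I expect to be the main obstacle is precisely the vanishing of the auxiliary $-i\pi$ integral, because of the branch bookkeeping of $\log z$: one must be certain that the cancellation of the straight edges is applied to the genuinely single-valued factor $g(z)$ and is not spoiled by the logarithm, and that the small loop is a full positive circle in the limit so that its contribution is exactly $2\pi i\,\operatorname{Res}_{z=0}g$. As an independent check---and this is where the contour deformation of Figures~1--2 (from $C$ to $\Gamma$) enters---I would collapse the Hankel contour onto the imaginary axis, control the connecting arcs at infinity by the exponential decay of $\operatorname{csch}z$ away from the poles, and sum the residues at $z=\pm ik\pi$. Using $\operatorname{Res}_{z=ik\pi}(1/\sinh z)=(-1)^k$ together with $\log(ik\pi)=\ln(k\pi)+i\pi/2$ and $\log(-ik\pi)=\ln(k\pi)+i3\pi/2$, the imaginary logarithmic parts combine across each conjugate pair to produce the alternating sum $\sum_{k\ge 1}(-1)^{k-1}k^{-(2n+1)}=(1-2^{-2n})\zeta(2n+1)$; this reproduces $\zeta(2n+1)$ and pins down the multiplicative constant to be exactly that of the Main Theorem \eqref{zetafpirep}, which is the natural consistency check on the displayed prefactor.
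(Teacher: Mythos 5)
Your proof follows essentially the same route as the paper's: invoke the contour-integral Lemma with $f(z)=z/\sinh z$ and $m=2n+2$, discard the $-i\pi$ term by noting that $1/(z^{2n+1}\sinh z)$ is single-valued and even so the Hankel contour collapses to a vanishing residue at the origin, substitute into the Main Theorem, and (as your consistency check) deform onto the imaginary axis and sum the residues at $z=\pm ik\pi$ exactly as in the paper's ``independent proof.'' One remark in your favor: your derivation correctly produces the prefactor $(2\pi)^{2n}/(2^{2n}-1)$, which agrees with the Main Theorem and with the paper's own concluding display, so the $(2^{2n}-2)$ appearing in the printed statement of the theorem is evidently a typographical error rather than a discrepancy in your argument.
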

\begin{proof}
Using the integral representation \eqref{fpicontourrep}, the finite-part integral assumes the representation
\begin{equation}\label{fpicontourrep2}
\begin{split}
    \bbint{0}{\infty}\frac{\mathrm{d}t}{t^{2n+1}\sinh t} = \frac{1}{2\pi i} \int_{\infty}^{(0^+)} \frac{1}{z^{(2n+1)+1}}\left(\frac{z}{\sinh z}\right) \left(\log z - i\pi\right)\,\mathrm{d}z,
    \end{split}
\end{equation}
The contour integration in the right hand side can be distributed. The second term is proportional to the $(2n+1)$ derivative of $z/\sinh z$ at the origin; since $z/\sinh z$ is even, the derivative vanishes. This leads to the contour integral representation for the finite-part integral,
\begin{equation}\label{fpicontourrep3}
\begin{split}
    \bbint{0}{\infty}\frac{\mathrm{d}t}{t^{2n+1}\sinh t} = \frac{1}{2\pi i} \int_{\infty}^{(0^+)} \frac{\log z}{z^{2n+1}\sinh z}\,\mathrm{d}z,
    \end{split}
\end{equation}
Substituting this back into equation \eqref{maintheorem} yields the contour integral representation for the zeta function at positive odd integers \eqref{zetaoddcrep}.

An independent proof can be given for the contour integral representation \eqref{zetaoddcrep}. We deform the contour of integration into the contour $\Gamma$ as shown in Figure-2 and consider the integral around the contour $\Gamma$,
	\begin{equation}
	    \int_{\Gamma} \frac{\log z}{z^{2n+1} \sinh z}\,\frac{\mathrm{d}z}{2\pi i} .
	\end{equation}
	The integrals along the vertical lines, either from the left or from the right, has the bound
	\begin{equation}
		\int_V \frac{\log z}{z^{2n+1} \sinh z}\,\mathrm{d}z < \frac{2^{\frac{1}{2}}}{(\cosh(2L)-1)^{\frac{1}{2}}} \int_0^{\infty} \frac{\left[\ln(L^2+y^2)+3\pi\right]}{\left(L^2 + y^2\right)^{3/2}}\,\mathrm{d}y ,
	\end{equation}
uniformly for all $m$ and $n$. Clearly the right hand side vanishes in the limit $L\rightarrow \infty$. On the other hand, the integrals along the horizontal lines, either above or below, has the bound
\begin{equation}
	\int_H \frac{\log z}{z^{2n+1}\sinh z}\,\mathrm{d}z < \frac{1}{\pi^3 (m+1/2)^3} \int_0^{\infty} \frac{\left[\ln(x^2+(m+1/2)^2 \pi^2)\right]}{\cosh x}\,\mathrm{d}x, 
\end{equation} 
uniformly for all $L$ and $n$. This likewise vanishes in the limit $m\rightarrow\infty$. 

The overlapping integrals along the imaginary axis cancel out and we are left with the contributions coming from the paired semi-circles which combine to form circles around the poles of $\operatorname{csch} z$. The resulting integral around each circle is proportional to the residue at the pole. Taking everything into account, the result is
\begin{equation}\label{integralaround}
	\int_{\Gamma} \frac{\log z}{z^{2n+1} \sinh z}\,\frac{\mathrm{d}z}{2\pi i}=-\sum_{m=-\infty}^{\infty}\!\!\!\!{}^{\prime} \operatorname{Res}\left[\frac{\log z}{z^{2n+1}\sinh z},z=m\pi i\right],
\end{equation} 
where the prime indicates that $m=0$ is excluded from the sum and the negative sign arises from the fact that the circles are traversed in the clockwise direction. The residues at $z=i m\pi=m\pi e^{i\pi/2}$ are calculated to be
\begin{equation}\label{residueup}
	\operatorname{Res}\left[\frac{\log z}{z^{2n+1}\sinh z},z=i m\pi\right] = -\frac{i (-1)^{m+n}}{\pi^{2n+1}m^{2n+1}}\,\left(\ln\pi m +\frac{\pi i}{2}\right);
\end{equation}
on the other hand, the residues at $z=-i m\pi = m\pi e^{i3\pi/2}$ are calculated to be
\begin{equation}\label{residuedown}
	\operatorname{Res}\left[\frac{\log z}{z^{2n+1}\sinh z},z=-i m\pi\right] = \frac{i (-1)^{m+n}}{\pi^{2n+1}m^{2n+1}}\,\left(\ln\pi m +\frac{3\pi i}{2}\right). 
\end{equation}
Remember that the branch cut of $\log z$ has been chosen to be the positive real axis so that the argument of $-i$ is $3\pi/2$.

Substituting these residues back into equation-\eqref{integralaround}, the logarithmic terms cancel and we arrive at the expression
\begin{equation}
	\int_{\Gamma} \frac{\log z}{z^{2n+1} \sinh z}\,\frac{\mathrm{d}z}{2\pi i} = \frac{(-1)^n}{\pi^{2n}} \sum_{m=1}^{\infty} \frac{(-1)^{m}}{m^{2n+1}} .
\end{equation}
Since the integrals $\int_{\infty}^{(0^+)}$ and $\int_{\Gamma}$ are equal, we obtain 
\begin{equation}
	\int_{\infty}^{(0^+)} \frac{\log z}{z^{2n+1} \sinh z}\,\frac{\mathrm{d}z}{2\pi i} = \frac{(-1)^{n+1} (2^{2n}-1)}{(2\pi)^{2n}} \, \zeta(2n+1) ,
\end{equation}
 upon using the summation formula \eqref{zetasum}. Then the contour integral representation \eqref{zetaoddcrep} follows.
\end{proof}

The contour integral representation allows us to obtain more integral representations by appropriate deformations of the contour of integration. In particular, we can arrive at the representation \eqref{fpirepx1} by deforming the contour of integration into the key-hole contour $C'$ shown in Figure-1. Doing so leads to
\begin{equation}\label{lae}
    \begin{split}
        \int_{C} \frac{\log z}{z^{2n+1} \sinh z}\,\frac{\mathrm{d}z}{2\pi i}=\int_{C_1} \frac{\log z}{z^{2n+1} \sinh z}\,\frac{\mathrm{d}z}{2\pi i} + \int_1^{\infty}\frac{\mathrm{d}t}{t^{2n+1} \sinh t} .
    \end{split}
\end{equation}
The integral around the circle is evaluated by inserting the expansion \eqref{expansioncsch} for $1/\sinh z$ in the integral and using the parametrization $z=e^{i\theta}$. The result is
\begin{equation}
    \int_{C_1} \frac{\log z}{z^{2n+1} \sinh z}\,\frac{\mathrm{d}z}{2\pi i}= \sum_{k=0}^{\infty} \frac{(2^{2k}-2) B_{2k}}{(2k)! (2n+1-2k)} .
\end{equation}
Substituting this back into equation \eqref{lae}, we reproduce the representation of the finite-part integral given by equation-\eqref{fpirepx1}. 

%%%%%%%%%%%%%%%%%%%%%%%%%%%%%%%%%%%%%%%%%%%%%%%%%%%%
%%%%%%%%%%%%%%%%%%%%%%%%%%%%%%%%%%%%%%%%%%%%%%%%%%%%
\begin{figure}\label{fig:deformed2}
    \centering
	\includegraphics[scale=0.3]{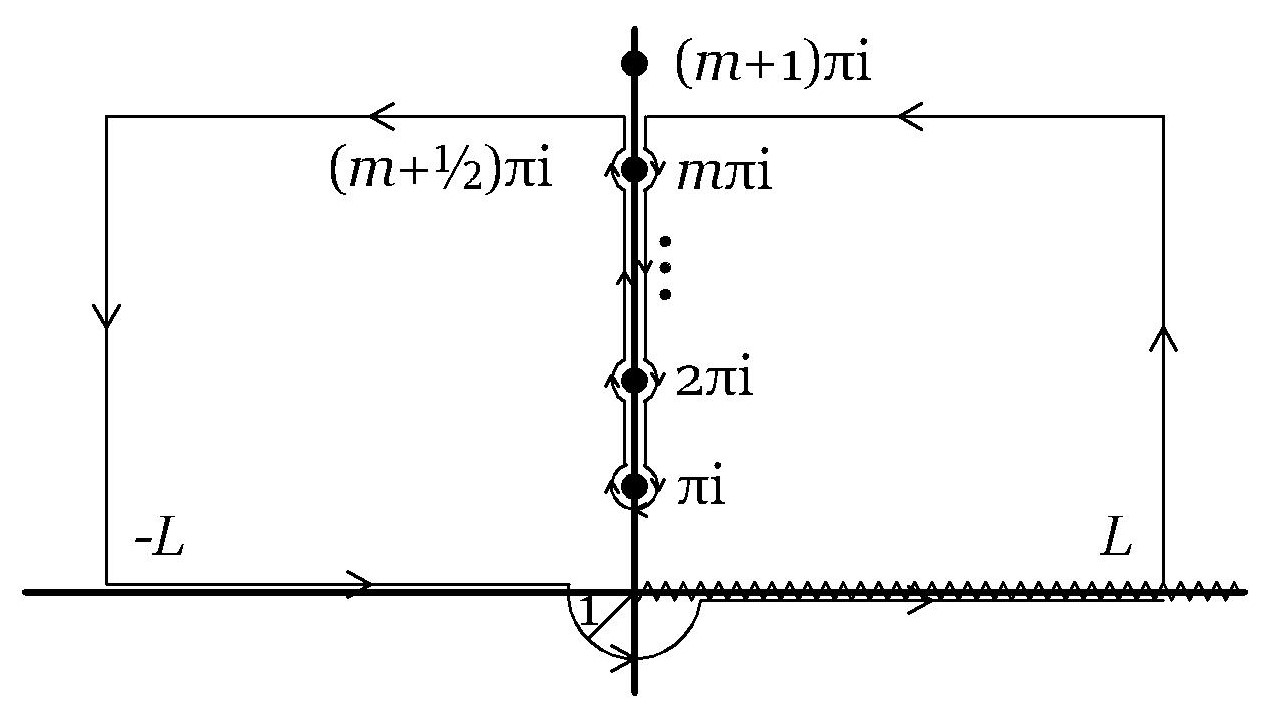}
	\caption{The contour of integration in the derivation of the relationship between $\zeta(2n+1)$ and $\zeta'(2n+1)$.}
\end{figure}
%%%%%%%%%%%%%%%%%%%%%%%%%%%%%%%%%%%%%%%%%%%%%%%%%%%
%%%%%%%%%%%%%%%%%%%%%%%%%%%%%%%%%%%%%%%%%%%%%%%%%%%

\section{The Derivative at Odd Positive Positive Integers}\label{proofTheorem3}
We now obtain a specific consequence of the contour integral representation of $\zeta(2n+1)$.
Notice that the residues contain a logarithmic term which can potentially induce the first derivative of the zeta function at positive odd integers. However, this has canceled with the chose contour of integration. Here we deform the contour in such as way that the logarithmic term does not get canceled. One such deformation is shown in Figure-3. This leads to a relationship between the zeta function and its derivative at positive odd integers in the following results.
\begin{proposition} For all positive integer $n$,
    \begin{equation}\label{zetaprime2}
    \begin{split}
        \zeta'(2n+1) =& \ln\!\left(\frac{\pi}{2^{1/(2^{2n}-1)}}\right) \zeta(2n+1) \\
        &\hspace{-8mm}+\frac{(-1)^n (2\pi)^{2n}}{(2^{2n}-1)}\left[\sum_{k=0}^{\infty}\frac{(2^{2k}-2) B_{2k}}{(2k)! (2n-2k+1)^2} + \int_1^{\infty} \frac{\ln t}{t^{2n+1} \sinh t}\,\mathrm{d}t\right].
        \end{split}
    \end{equation}
\end{proposition}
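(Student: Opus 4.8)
The plan is to mimic the contour-integral proof of Theorem~5 (the representation \eqref{zetaoddcrep}), but deform the contour as in Figure-3 so that the logarithmic piece of each residue survives instead of cancelling. Concretely, I would start from the contour integral representation
\begin{equation*}
\zeta(2n+1)=(-1)^{n+1}\frac{(2\pi)^{2n}}{(2^{2n}-2)}\int_{\infty}^{(0^+)}\frac{\log z}{z^{2n+1}\sinh z}\,\frac{\mathrm{d}z}{2\pi i},
\end{equation*}
and observe that the integrand already carries one factor of $\log z$. The key idea is that differentiating $\zeta(s)$ at $s=2n+1$ should correspond to inserting an \emph{extra} factor of $\log z$ (morally $-\log z$ from $\tfrac{\mathrm{d}}{\mathrm{d}s}z^{-s}$), so that the relevant object to evaluate is $\int_{\infty}^{(0^+)}(\log z)^2 z^{-2n-1}\operatorname{csch}z\,\mathrm{d}z/(2\pi i)$. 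I expect the relation \eqref{zetaprime2} to emerge by computing this second contour integral two ways: once by residues on the deformed contour $\Gamma$, and once by collapsing onto the real axis to produce the explicit integral $\int_1^\infty \ln t\,t^{-2n-1}\operatorname{csch}t\,\mathrm{d}t$ together with the boundary/series contribution.

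\textbf{Residue computation.} First I would repeat the contour-deformation and bounding argument verbatim from the proof of Theorem~5: the integrals over the vertical and horizontal segments vanish as $L\to\infty$ and $m\to\infty$ by the same $(\cosh 2L-1)^{-1/2}$ and $\pi^{-3}(m+1/2)^{-3}$ estimates (now with one more power of $\log$, which does not affect convergence), the overlapping segments along the imaginary axis cancel, and the paired semicircles close into clockwise circles about the poles $z=\pm im\pi$. The new input is the residue of $(\log z)^2 z^{-2n-1}\operatorname{csch}z$ at these poles. Using $z=m\pi e^{i\pi/2}$ and $z=m\pi e^{i3\pi/2}$, so that $\log(im\pi)=\ln(m\pi)+i\pi/2$ and $\log(-im\pi)=\ln(m\pi)+i3\pi/2$, I would square these logarithms in the residue formulas analogous to \eqref{residueup}--\eqref{residuedown}. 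Crucially, upon summing the upper and lower residues the imaginary parts no longer cancel cleanly: the cross term $2\ln(m\pi)\cdot(\text{imaginary part})$ survives and produces a factor linear in $\ln(m\pi)=\ln\pi+\ln m$. Splitting $\ln(m\pi)=\ln\pi+\ln m$ and summing against $(-1)^m/m^{2n+1}$ then yields a $\ln\pi$ multiple of $\sum(-1)^m m^{-(2n+1)}$ — which is $\zeta(2n+1)$ via \eqref{zetasum} — plus a term $\sum(-1)^m \ln m/m^{2n+1}$, which is precisely $-\tfrac{\mathrm{d}}{\mathrm{d}s}\big[(2^{1-s}-1)\zeta(s)\big]$ at $s=2n+1$ and hence reconstructs both $\zeta'(2n+1)$ and the $\ln 2$ coefficient $\tfrac{\ln 2}{2^{2n}-1}$ visible in \eqref{zetaprime2}.

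\textbf{Real-axis / keyhole side.} On the other side, I would deform the same $(\log z)^2$ contour onto the positive real axis using the keyhole of Figure-3 (as in the passage following Theorem~5 leading to \eqref{lae}). Along the top and bottom of the branch cut the discontinuity of $(\log z)^2$ combines to give a real integral involving $\ln t$ over $t^{-2n-1}\operatorname{csch}t$; splitting at the radius $b=1$ exactly as before separates the convergent tail $\int_1^\infty \ln t\,t^{-2n-1}\operatorname{csch}t\,\mathrm{d}t$ from the small-circle/series part. Inserting the expansion \eqref{expansioncsch} and integrating $(\log z)^2$ termwise over the unit circle (the $z^{2k-2n-1}$ against $(\log z)^2$) produces $\sum_k \tfrac{(2^{2k}-2)B_{2k}}{(2k)!(2n-2k+1)^2}$, where the squared denominator is the signature of having integrated $(\log z)^2$ rather than $\log z$. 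Equating the two evaluations and solving for $\zeta'(2n+1)$ gives \eqref{zetaprime2}.

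\textbf{Main obstacle.} The delicate part is bookkeeping the branch of $\log z$ and the factors of $2\pi i$ when squaring the logarithm: with the cut on the positive real axis, the top of the cut carries $\log t=\ln t$ and the bottom carries $\log t=\ln t+2\pi i$, so $(\log z)^2$ has discontinuity $(\ln t)^2-(\ln t+2\pi i)^2 = -4\pi i\ln t+4\pi^2$, and I must track both the $\ln t$ piece (giving the stated integral) and the constant $4\pi^2$ piece (which must reorganize into the residue-side $\ln\pi$ and $\ln 2$ terms rather than produce a spurious contribution). Keeping the $i\pi/2$ versus $i3\pi/2$ arguments straight in the residues, and verifying that the purely imaginary and purely real leftover terms assemble into exactly the coefficient $\ln(\pi/2^{1/(2^{2n}-1)})$ multiplying $\zeta(2n+1)$, is where the proof will demand the most care.
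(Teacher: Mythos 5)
Your plan works and, carried to completion, lands on exactly the paper's key intermediate identity \eqref{imaginary} --- but it gets there by a genuinely different mechanism. The paper never squares the logarithm: it keeps the single-$\log$ integrand of \eqref{zetaoddcrep} and instead breaks the \emph{symmetry of the contour} (Figure-3), deforming so that only the poles on the positive imaginary axis are encircled, together with a ray along the real axis and a lower semicircle of radius one; since only the residues \eqref{residueup} enter, their $\ln(\pi m)$ parts have nothing to cancel against, and because the left-hand side is manifestly real, setting the imaginary part of the right-hand side to zero yields \eqref{imaginary}. Your version keeps the symmetric contour of Theorem 5 (so all its bounds can be reused verbatim) and instead breaks the cancellation by squaring: the residues at $\pm im\pi$ of $(\log z)^2z^{-2n-1}\operatorname{csch}z$ sum to $\frac{(-1)^{m+n}}{(\pi m)^{2n+1}}\bigl(-2\pi\ln(\pi m)-2\pi^2 i\bigr)$, whose \emph{real} part carries the sum $\sum_m(-1)^m\ln(\pi m)/m^{2n+1}$; on the keyhole side the jump $-4\pi i\ln t+4\pi^2$ together with $\frac{1}{2\pi i}\oint(\log z)^2z^{2k-2n-2}\,\mathrm{d}z=\frac{2\pi i}{2k-2n-1}-\frac{2}{(2k-2n-1)^2}$ shows the real part equals $2\sum_k\frac{(2^{2k}-2)B_{2k}}{(2k)!(2n-2k+1)^2}+2\int_1^\infty\ln t\,t^{-2n-1}\operatorname{csch}t\,\mathrm{d}t$, so equating real parts reproduces \eqref{imaginary} and the rest of the proof (splitting $\ln(\pi m)=\ln\pi+\ln m$ and invoking \eqref{zetasum} and $\sum_m(-1)^m\ln m/m^{2n+1}=\frac{2^{2n-1}-1}{2^{2n}}\zeta'(2n+1)$) is identical to the paper's. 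One reassurance on the obstacle you flag: the constant $4\pi^2$ piece of the discontinuity and the $2\pi^2 i$ pieces of the residues live entirely in the \emph{imaginary} part of your identity, where they merely reproduce the already-known representation \eqref{fpirepx1} as a consistency check; they cannot contaminate \eqref{zetaprime2}, whose $\ln\pi$ and $\ln 2$ coefficients come solely from the $\ln(\pi m)$ split in the real part.
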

\begin{proof}
To prove we use the contour integral representation \eqref{zetaoddcrep} of $\zeta(2n+1)$, and deform the contour of integration as indicated in Figure-3. From our results above, the integrals along the sides vanish in the limit $L\rightarrow\infty$ and so with the top integral in the limit $m\rightarrow\infty$. Also the overlapping integrals along the imaginary axis cancel. The only contributing integrals come from the circles around each pole along the positive imaginary axis, along the real axis and along the semicircle. The integrals along the circles are residues and their values are given by equation-\eqref{residueup}.

Performing the integral along the semicircle with the parametrization $z=e^{i\theta}$, $\pi<\theta<2\pi$, and substituting the relevant residues, we obtain
    \begin{equation}\label{beo}
    \begin{split}
        \frac{1}{2\pi i} \int_C \frac{\log z}{z^{2n+1} \sinh z}\,\mathrm{d}z =& \left[\frac{(-1)^{n+1}}{2 \pi^{2n}} \sum_{m=1}^{\infty} \frac{(-1)^m}{m^{2n+1}}+\frac{3}{2}\sum_{k=0}^{\infty}\frac{(2^{2k}-2)B_{2k}}{(2k)!(2n-2k+1)}\right.\\
        &\left.+ \frac{3}{2} \int_1^{\infty} \frac{1}{t^{2n+1}\sinh t}\, \mathrm{d}t\right]\\
        &\hspace{-18mm}+ i \left[\frac{(-1)^n}{\pi^{2n+1}} \sum_{m=1}^{\infty}\frac{(-1)^m \ln(\pi m)}{m^{2n+1}}-\frac{1}{\pi} \sum_{k=0}^{\infty} \frac{(2^{2k}-2)B_{2k}}{(2k)! (2n-2k+1)^2}\right.\\
        &\left.-\frac{1}{\pi} \int_1^{\infty}\frac{\ln t}{t^{2n+1}\sinh t}\, \mathrm{d}t\right] .
        \end{split}
    \end{equation}
    
The left hand side of \eqref{beo} is real and is proportional to $\zeta(2n+1)$. We equate the real and imaginary parts of the two sides of \eqref{beo}. What is relevant to us is the imaginary part. The imaginary part of the left hand side is zero and this yields the equality
    \begin{equation}\label{imaginary}
        \begin{split}
            &\frac{(-1)^n}{\pi^{2n+1}} \sum_{m=1}^{\infty}\frac{(-1)^m \ln(\pi m)}{m^{2n+1}}-\frac{1}{\pi} \sum_{k=0}^{\infty} \frac{(2^{2k}-2)B_{2k}}{(2k)! (2n-2k+1)^2}\\
        &\hspace{34mm}-\frac{1}{\pi} \int_1^{\infty}\frac{\ln t}{t^{2n+1}\sinh t}\, \mathrm{d}t = 0 .
        \end{split}
    \end{equation}
    The first series is evaluated by expanding the logarithm and distributing the sum to yield
    \begin{equation}\label{sumsum}
        \sum_{m=1}^{\infty} \frac{(-1)^m \ln(\pi m)}{m^{2n+1}}= \left(\frac{\ln 2}{2^{2n}}-\frac{(2^{2n}-1)}{2^{2n}}\right) \zeta(2n+1) + \frac{(2^{2n}-1)}{2^{2n}}\zeta'(2n+1),
    \end{equation}
where the first term comes from the sum \eqref{zetasum} and the second sum from the known sum
    \begin{equation}
        \sum_{m=1}^{\infty}\frac{(-1)^m \ln m}{m^{2n+1}} = \frac{(2^{2n-1}-1)}{2^{2n}} \zeta'(2n+1) .
    \end{equation}
Substituting \eqref{sumsum} back into equation \eqref{imaginary} and performing simplifications give the representation \eqref{zetaprime2}.
\end{proof}

In the following two results, we obtain relationships between $\zeta(2n+1)$ and $\zeta'(2n+1)$, and integral representations for $\zeta'(2n+1)$ corresponding to the integral representations \eqref{fpirepx2} and \eqref{zetaoddintegrep1} of $\zeta(2n+1)$.
\begin{theorem}
For all positive integer $n$, the zeta function and its derivative at odd positive integers are related according to the expression
\begin{equation}\label{relationship2}
	\begin{split}
	&\zeta'(2n+1) = \ln\!\left(\frac{\pi}{2^{1/(2^{2n}-1)}}\right) \zeta(2n+1)\\
	 &\hspace{10mm}+ \frac{(-1)^n (2\pi)^{2n}}{(2^{2n}-1)} \int_0^{\infty} \frac{\ln t}{t^{2n+1}}\left(\frac{1}{\sinh t}+\sum_{k=0}^n \frac{(2^{2k}-2) B_{2k} t^{2k-1}}{(2k)!}\right)\mathrm{d}t .
	\end{split}
\end{equation}
Moreover, the derivative at odd positive integers assumes the integral representation
\begin{equation}\label{zetaprimerep3}
\begin{split}
    \zeta'(2n+1)=&\frac{(-1)^n (2\pi)^{2n}}{(2^{2n}-1)} \int_0^{\infty} \ln\!\left(\frac{2^{1/(2^{2n}-1)}}{\pi} t\right)\\
    &\hspace{20mm}\times\left(\frac{1}{\sinh t}+\sum_{k=0}^n \frac{(2^{2k}-2) B_{2k} t^{2k-1}}{(2k)!}\right)\frac{\mathrm{d}t}{t^{2n+1}} .
    \end{split}
\end{equation}
\end{theorem}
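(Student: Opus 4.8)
The plan is to read the theorem off from the Proposition. Its conclusion \eqref{zetaprime2} already expresses $\zeta'(2n+1)$ through $\zeta(2n+1)$ and the bracketed quantity
\[
S=\sum_{k=0}^{\infty}\frac{(2^{2k}-2) B_{2k}}{(2k)! (2n-2k+1)^2} + \int_1^{\infty} \frac{\ln t}{t^{2n+1} \sinh t}\,\mathrm{d}t,
\]
so the heart of the matter is to recast $S$ as the single integral $\int_0^{\infty}$ appearing in \eqref{relationship2}. I would prove this by splitting that integral across $[0,1]$ and $[1,\infty)$. The one point demanding care is that the two constituents of its integrand---the $1/\sinh t$ term and the subtracted polynomial $\sum_{k=0}^n (2^{2k}-2)B_{2k}t^{2k-1}/(2k)!$---cannot be integrated separately over the whole half-line, since near the origin $\ln t/(t^{2n+1}\sinh t)\sim\ln t/t^{2n+2}$ is non-integrable; they must be handled together on $[0,1]$.

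On $[1,\infty)$ each constituent is individually integrable: $1/\sinh t$ decays exponentially, while the polynomial contributes powers $t^{2k-2n-2}$ with $k\le n$, hence exponents $\le -2$. The $1/\sinh t$ constituent reproduces verbatim the tail integral $\int_1^{\infty}\ln t/(t^{2n+1}\sinh t)\,\mathrm{d}t$ in $S$, and the elementary evaluation $\int_1^{\infty}t^{2k-2n-2}\ln t\,\mathrm{d}t=1/(2n-2k+1)^2$ turns the polynomial constituent into the finite sum $\sum_{k=0}^{n}$. On $[0,1]$ I would insert the expansion \eqref{expansioncsch}; the subtracted polynomial cancels its first $n+1$ terms, so the combined integrand equals $-\sum_{k=n+1}^{\infty}(2^{2k}-2)B_{2k}\,t^{2k-2n-2}\ln t/(2k)!$, whose lowest power is $t^{0}\ln t$ and is therefore integrable at the origin. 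Integrating term by term---legitimate because the expansion converges uniformly on $[0,1]\subset(-\pi,\pi)$ and $\ln t$ is integrable against each bounded power---and using $\int_0^1 t^{2k-2n-2}\ln t\,\mathrm{d}t=-1/(2n-2k+1)^2$ produces the tail sum $\sum_{k=n+1}^{\infty}$. Adding the finite and tail sums restores the full series $\sum_{k=0}^{\infty}$ of $S$, which proves \eqref{relationship2}.

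For the integral representation \eqref{zetaprimerep3} I would eliminate $\zeta(2n+1)$ from \eqref{relationship2} using \eqref{fpirepx2}. After substituting, I factor out the common prefactor $(-1)^n(2\pi)^{2n}/(2^{2n}-1)$; the identity $(-1)^{n+1}=-(-1)^n$ then converts the constant $\ln(\pi/2^{1/(2^{2n}-1)})$ into $\ln(2^{1/(2^{2n}-1)}/\pi)$ multiplying the very integral that the $\ln t$ weight also multiplies. Because both integrals share the integrand factor $(1/\sinh t+\sum_{k=0}^n (2^{2k}-2)B_{2k}t^{2k-1}/(2k)!)/t^{2n+1}$, they coalesce into one integral with logarithmic weight $\ln(2^{1/(2^{2n}-1)}/\pi)+\ln t=\ln\big((2^{1/(2^{2n}-1)}/\pi)\,t\big)$, which is exactly \eqref{zetaprimerep3}.

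I expect the only real obstacle to be the convergence bookkeeping at the endpoints---verifying integrability of the combined integrand at $t=0$ (guaranteed by the remainder of \eqref{expansioncsch} starting at order $t^{2n+1}$), the separate integrability on $[1,\infty)$, and the uniform convergence underwriting the term-by-term integration on $[0,1]$. Everything else reduces to the two elementary integrals $\int_0^1 t^{\alpha}\ln t\,\mathrm{d}t$ and $\int_1^{\infty} t^{\alpha}\ln t\,\mathrm{d}t$.
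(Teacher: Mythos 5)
Your proposal is correct and follows essentially the same route as the paper: both reduce \eqref{relationship2} to the Proposition's formula by splitting at $t=1$, converting $1/(2n-2k+1)^2$ into the elementary integrals $\int_1^{\infty}t^{2k-2n-2}\ln t\,\mathrm{d}t$ and $-\int_0^1 t^{2k-2n-2}\ln t\,\mathrm{d}t$, and using the cancellation between the subtracted polynomial and the first $n+1$ terms of the $\operatorname{csch}$ expansion on $[0,1]$; the only difference is that you verify the decomposition of the target integral while the paper assembles it from the series, and your derivation of \eqref{zetaprimerep3} by substituting \eqref{fpirepx2} and merging the logarithms is identical to the paper's.
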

\begin{proof}
    This result follows directly from equation \eqref{zetaprime2}. We break the sum in \eqref{zetaprime2} in two parts,
    \begin{equation}
    \begin{split}
        &\sum_{k=0}^{\infty}\frac{(2^{2k}-2) B_{2k}}{(2k)! (2n-2k+1)^2}\\
        &\hspace{18mm}=\sum_{k=0}^{n}\frac{(2^{2k}-2) B_{2k}}{(2k)! (2n-2k+1)^2}+\sum_{k=n+1}^{\infty}\frac{(2^{2k}-2) B_{2k}}{(2k)! (2n-2k+1)^2} .
        \end{split}
    \end{equation}
    We rewrite the second term with the replacement
    \begin{equation}\label{replacement}
        \frac{1}{(2n-2k+1)^2}=-\int_0^1 t^{2k-2n-2}\ln t \, \mathrm{d}t, \;\;\; k-n>\frac{1}{2} .
    \end{equation}
    We have
    \begin{equation}\label{koki}
        \begin{split}
            &\sum_{k=0}^{\infty}\frac{(2^{2k}-2) B_{2k}}{(2k)! (2n-2k+1)^2}=\sum_{k=0}^{n}\frac{(2^{2k}-2) B_{2k}}{(2k)! (2n-2k+1)^2}\\
            &\hspace{8mm}+\int_0^1 \frac{\ln t}{t^{2n+1}}\left(-\sum_{k=0}^n\frac{(2^{2k}-2) B_{2k} t^{2k-1}}{(2k)!}- \sum_{k=n+1}^{\infty} \frac{(2^{2k}-2) B_{2k} t^{2k-1}}{(2k)!}\right.\\
            &\hspace{30mm}\left.+\sum_{k=0}^n\frac{(2^{2k}-2) B_{2k} t^{2k-1}}{(2k)!}\right)\,\mathrm{d}t
        \end{split}
    \end{equation}
    where a zero has been added to the integrand. 
    
    The first two sums in the integral in the left hand side of \eqref{koki} can be combined into a single sum which happens to be equal to $1/\sinh t$; on the other hand, the third sum in the integrand can be integrated using the integral \eqref{replacement}. The result is
    \begin{equation}\label{boxi}
        \begin{split}
            &\sum_{k=0}^{\infty}\frac{(2^{2k}-2) B_{2k}}{(2k)! (2n-2k+1)^2}=\sum_{k=0}^{n}\frac{(2^{2k}-2) B_{2k}}{(2k)! (2n-2k+1)^2}\\
            &\hspace{28mm}+\int_0^1 \frac{\ln t}{t^{2n+1}}\left(\frac{1}{\sinh t}+\sum_{k=0}^n\frac{(2^{2k}-2) B_{2k} t^{2k-1}}{(2k)!}\right)\,\mathrm{d}t .
        \end{split}
    \end{equation}

Next, in the first term of the right hand side of \eqref{boxi}, we perform the substitution
\begin{equation}
    \frac{1}{(2n-2k+1)^2}=\int_1^{\infty} \frac{\ln t}{t^{2n-2k+2}}\,\mathrm{d}t,\;\; k=0,\dots,n, .
\end{equation}
We then substitute the result back in equation \eqref{zetaprime2}. We find that the entire expression combine into a single integral giving us the representation \eqref{relationship2}. 

To arrive at the integral representation \eqref{zetaprimerep3}, we replace $\zeta(2n+1)$ with its integral representation \eqref{fpirepx2} and combine all terms to yield the integral representation \eqref{zetaprimerep3} for the derivative of the zeta function at odd positive integers.
\end{proof}

\begin{theorem}
For all positive integer $n$, the zeta function and its derivative at odd positive integers satisfy the relation
\begin{equation}\label{relationship3}
\begin{split}
    \zeta'(2n+1)=&\left(\ln\left(\frac{\pi}{2^{1/(2^{2n}-1)}}\right)-H_{2n+1}\right) \zeta(2n+1) \\
    &+ (-1)^n \frac{(2\pi)^{2n}}{(2^{2n}-1)(2n+1)!} \int_0^{\infty} \ln t \, \frac{\mathrm{d}^{2n+1}}{\mathrm{d}t^{2n+1}}\left(\frac{t}{\sinh t}\right)\,\frac{\mathrm{{d}t}}{t},
    \end{split}
\end{equation}
where $H_{2n+1}$ is a harmonic number. Moreover, the derivative at odd positive integers assumes the integral representation
\begin{equation}\label{integralrepDzeta}
    \begin{split}
        \zeta'(2n+1)=&(-1)^{n} \frac{(2\pi)^{2n}}{(2^{2n}-1)(2n+1)!}\\
        &\times\int_0^{\infty} \ln\!\left(\frac{2^{1/(2^{2n}-1)} e^{H_{2n+1}}}{ \pi} t\right)\, \frac{\mathrm{d}^{2n+1}}{\mathrm{d}t^{2n+1}}\left(\frac{t}{\sinh t}\right)\,\frac{\mathrm{{d}t}}{t}.
    \end{split}
\end{equation}
\end{theorem}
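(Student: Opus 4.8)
The plan is to mirror the proof of the preceding theorem, but to route the manipulations through the integration-by-parts representation \eqref{fpider}/\eqref{zetaoddintegrep1} rather than the splitting representation \eqref{hanzan2}/\eqref{fpirepx2}. I would start from the already-established relation \eqref{relationship2} and observe that the integral appearing there is the logarithmically weighted finite part
\[
\mathcal{I}=\int_0^{\infty}\frac{\ln t}{t^{2n+1}}\left(\frac{1}{\sinh t}+\sum_{k=0}^n\frac{(2^{2k}-2)B_{2k}}{(2k)!}t^{2k-1}\right)\mathrm{d}t=\bbint{0}{\infty}\frac{\ln t}{t^{2n+1}\sinh t}\,\mathrm{d}t ,
\]
the subtracted polynomial being precisely the divergent part produced by the expansion \eqref{expansioncsch} of $\ln t/(t^{2n+1}\sinh t)$ at the origin. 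The entire proof then reduces to the single identity
\[
\bbint{0}{\infty}\frac{\ln t}{t^{2n+1}\sinh t}\,\mathrm{d}t=H_{2n+1}\bbint{0}{\infty}\frac{\mathrm{d}t}{t^{2n+1}\sinh t}+\frac{1}{(2n+1)!}\int_0^{\infty}\ln t\,\frac{\mathrm{d}^{2n+1}}{\mathrm{d}t^{2n+1}}\!\left(\frac{t}{\sinh t}\right)\frac{\mathrm{d}t}{t} .
\]

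To prove this identity I would write $g(t)=t/\sinh t$ and integrate $\int_\epsilon^{\infty}\ln t\,g(t)\,t^{-2n-2}\,\mathrm{d}t$ by parts $2n+1$ times, exactly as in the proof of Theorem \ref{zetaoddorig}, each step lowering the inverse power of $t$ by one and raising the order of the derivative on $g$ by one. The essential new feature is that, because the weight is $\ln t$ rather than a pure power, at every step the product rule splits the integrand into two pieces: when the derivative falls on $\ln t$ it produces a factor $1/t$ and hence a non-logarithmic remainder proportional to $g^{(j-1)}(t)\,t^{-(2n-j+3)}$, while when it falls on the remaining factor it continues the logarithmic reduction. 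After the $j$-th step the accumulated coefficient is $1/[(2n+1)(2n)\cdots(2n-j+2)]$, so the surviving logarithmic term finally carries the coefficient $1/(2n+1)!$ and reproduces the derivative integral on the right. Each non-logarithmic remainder is itself a finite part which, by the non-logarithmic reduction underlying the equivalence of \eqref{hanzan2} and \eqref{fpider}, collapses to a multiple of the basic finite part; a short computation shows the $j$-th remainder equals exactly $(2n-j+2)^{-1}\bbint{0}{\infty}\mathrm{d}t/(t^{2n+1}\sinh t)$, so that summing over $j=1,\dots,2n+1$ produces $\sum_{m=1}^{2n+1}m^{-1}=H_{2n+1}$ times the basic finite part. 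This is the mechanism that manufactures the $H_{2n+1}$ absent from the previous theorem.

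The main obstacle is the finite-part bookkeeping of the boundary terms at $t=\epsilon$. The intermediate integrands $\ln t\,g^{(j-1)}(t)\,t^{-(2n-j+3)}$ are individually non-integrable at the origin, and the boundary evaluations generate both diverging powers of $\epsilon$ and mixed terms of the form $\epsilon^{-r}\ln\epsilon$. I would use the expansion \eqref{expansioncsch} to exhibit these as the prescribed divergent structure and discard them according to the definition of the finite part, checking that no stray pure $\ln\epsilon$ term survives (the critical index $2k-2n-1=0$ never occurs, as it would require half-integer $k$) and that the convergence of $\mathcal{I}$ forces the cancellation of all remaining $\epsilon$-dependence. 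Assembling the surviving finite parts establishes the identity; substituting it into \eqref{relationship2} together with the Main Theorem \ref{maintheorem}, which turns $H_{2n+1}\bbint{0}{\infty}\mathrm{d}t/(t^{2n+1}\sinh t)$ into $-H_{2n+1}\zeta(2n+1)$, yields \eqref{relationship3}.

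Finally, I would deduce the integral representation \eqref{integralrepDzeta} from \eqref{relationship3} by replacing $\zeta(2n+1)$ in the coefficient term with its own representation \eqref{zetaoddintegrep1}. Since both resulting integrals carry the common kernel $g^{(2n+1)}(t)\,\mathrm{d}t/t$, they merge into a single integral, and absorbing the constant $\ln(\pi/2^{1/(2^{2n}-1)})-H_{2n+1}$ into the logarithm converts $\ln t$ into $\ln\!\big(2^{1/(2^{2n}-1)}e^{H_{2n+1}}t/\pi\big)$, which is exactly the kernel of \eqref{integralrepDzeta}. This last step is a routine rearrangement once \eqref{relationship3} is in hand.
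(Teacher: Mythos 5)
Your proposal is correct, but it organizes the key computation differently from the paper. Both arguments start from \eqref{relationship2} and end by merging the two integrals into \eqref{integralrepDzeta} in the same routine way; the difference lies entirely in how the harmonic number is manufactured. The paper keeps the subtracted integrand $R(t)=t/\sinh t+\sum_{k=0}^{n}(2^{2k}-2)B_{2k}t^{2k}/(2k)!$ intact and integrates $\int_0^\infty t^{-2n-2}\ln t\,R(t)\,\mathrm{d}t$ by parts $2n+1$ times using the antiderivative $\int x^{m}\ln x\,\mathrm{d}x=x^{m+1}[\ln x/(m+1)-1/(m+1)^2]$; since $R$ vanishes to high order at the origin every intermediate integral converges, the boundary terms vanish by inspection, and $H_{2n+1}$ accumulates inside the final integrand as the factor $\ln t+H_{2n+1}$, after which the $H_{2n+1}$-piece is identified with $\zeta(2n+1)$ via \eqref{zetaoddintegrep1}. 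You instead strip away the subtraction, read the integral in \eqref{relationship2} as the finite part $\bbint{0}{\infty}t^{-2n-1}\ln t\,\operatorname{csch}t\,\mathrm{d}t$, and integrate the raw divergent integrand by parts in the finite-part sense; the harmonic number then appears as the sum $\sum_{j=1}^{2n+1}(2n+2-j)^{-1}$ of the non-logarithmic remainders, each collapsed to the basic finite part via the intermediate stages of the \eqref{hanzan2}--\eqref{fpider} equivalence, and is converted to $-H_{2n+1}\zeta(2n+1)$ through the Main Theorem rather than through \eqref{zetaoddintegrep1}. Your route buys a conceptually cleaner statement (a self-contained identity for the log-weighted finite part) at the cost of the $\epsilon$-bookkeeping you flag: one must verify that every boundary term is a sum of $\epsilon^{2l-2n-1}\ln\epsilon$ with odd, hence nonzero, exponent, so nothing finite survives --- your observation that the critical index never occurs is exactly the needed check, and it does hold since $g^{(j-1)}(\epsilon)$ contributes only the powers $\epsilon^{2l-(j-1)}$ against the weight $\epsilon^{-(2n+2-j)}\ln\epsilon$. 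The paper's route avoids all finite-part bookkeeping in the intermediate steps, which is why it can dismiss the boundary terms ``by inspection''; both are valid, and the coefficient $1/(2n+2-j)$ you compute does sum to $H_{2n+1}$ as claimed.
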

\begin{proof}
	This representation follows from \eqref{relationship2} by performing successive integration by parts. Let us consider the integral 
	\begin{equation}
	    \int_0^{\infty} \frac{\ln t}{t^{2n+2}} \,R(t)\,\mathrm{d}t
	\end{equation}
	where 
	\begin{equation}\label{rt}
    R(t)=\frac{t}{\sinh t}+\sum_{k=0}^n \frac{(2^{2k}-2) B_{2k}}{(2k)!} t^{2k} .
\end{equation}
We perform $2n$ successive integration by parts using the known integral
	\begin{equation}
	    \int x^n \ln x \,\mathrm{d}x = x^{n+1}\left[\frac{\ln x}{n+t}-\frac{1}{(n+1)^2}\right]
	\end{equation}
	until the factor $t^{-2n-1}$ reduces to $t^{-1}$. By induction, we arrive at the expression
		\begin{equation}
		\begin{split}
	    \int_0^{\infty} \frac{\ln t}{t^{2n+2}} \,R(t)\,\mathrm{d}t = &\left.-\sum_{l=0}^{2n} \frac{(2n-l)!}{(2n+1)!} t^{l-2n-1} \left(\ln t + H_{2n+1}-H_{2n-l}\right)\, R^{(l)}(t) \right|_0^{\infty} \\
	    & + \frac{1}{(2n+1)!} \int_0^{\infty} \left(\ln t + H_{2n+1}\right)\, R^{(2n+1)}(t)\,\frac{\mathrm{d}t}{t} ,
	    \end{split}
	\end{equation}
	where the $H_m$'s are harmonic numbers. By inspection, the boundary terms vanish. Then
	\begin{equation}
	\begin{split}
	    \zeta'(2n+1)=&\ln\!\left(\frac{\pi}{2^{1/(2^{2n}-1)}}\right) \zeta(2n+1)\\
	    &\hspace{-14mm}+H_{2n+1}\frac{(-1)^n (2\pi)^{2n}}{(2^{2n}-1)(2n+1)!}\int_0^{\infty}  \frac{\mathrm{d}^{2n+1}}{\mathrm{d}t^{2n+1}}\left(\frac{t}{\sinh t}\right)\,\frac{\mathrm{d}t}{t}\\
	    &\hspace{-14mm}+\frac{(-1)^n (2\pi)^{2n}}{(2^{2n}-1)(2n+1)!}\int_0^{\infty} \ln t \frac{\mathrm{d}^{2n+1}}{\mathrm{d}t^{2n+1}}\left(\frac{t}{\sinh t}\right)\,\frac{\mathrm{d}t}{t}\
	    \end{split}
	\end{equation}
	Comparing the second term with the integral representation of $\zeta(2n+1)$ we find that it is proportional to $\zeta(2n+1)$. Writing the integral in the second term in terms of $\zeta(2n+1)$, we are led to the desired result \eqref{relationship3}.
	
    Substituting the integral representation for the zeta function $\zeta(2n+1)$ given by equation \eqref{zetaoddintegrep1} back into equation \eqref{relationship3} and performing simplifications yield integral representation \eqref{integralrepDzeta} for the derivative of the zeta function at odd positive integers.
\end{proof}

\section{Conclusion}\label{conclusion}
We obtained a finite-part integral representation of the values of the Riemann zeta function at odd positive integers. From the FPI-representation, integral representations for $\zeta(2n+1)$ are obtained, and so with relationships between the zeta function and its derivative at odd positive integers, culminating in integral representations for the derivative of the zeta function at odd positive integers. Whether or not the finite-part integral representation of $\zeta(2n+1)$ will give new insight into the nature of $\zeta(2n+1)$, 
we have demonstrated here that the finite-part integral may provide meaningful representations to functions. The properties of a given function is rarely exhibited by a single expression defining the function. The full array of properties can only be discovered by representing the function in various ways. Thus we have infinite series, infinite product, integral, contour integral, continued fraction, residue representations available in describing a function. Each representation offers a perspective that may not be available from other representations. This motivates the development of new means of representing a given function. Here we have seen that finite-part integral representation may be added to the list of available representations for a given function. We have seen that an FPI-representation is a capsule representation which can be unpacked to provide new representations for functions. More than that, it is possible that FPI-representation may provide new fundamental insights into the nature of functions by studying the fundamental properties of the finite-part integral itself.

\section*{Appendix}
Here we show in reasonable detail that the integral exists in equation \eqref{reprep}. The Hurwitz zeta function $\zeta(2,z)$ is analytic in the real line, in particular at $z=1/2$, which renders it locally integrable in the positive real line. Since it is analytic at $z=1/2$, it is integrable there. We establish the behavior of the integrand at infinity by appealing to the fact that $\zeta(2,z)$ is equal the trigamma function $\psi^{(1)}(z)$ for $\operatorname{Re}z>0$. Using the known asymptotic behavior of $\psi^{(1)}(z)$ for large values of the arguments, we have the asymptotic behavior
\begin{equation}
    \zeta(2,z)\sim \frac{1}{z}+\frac{1}{2 z^2} + \sum_{k=1}^{\infty} \frac{B_{2k}}{z^{2k+1}},\;\; |\operatorname{arg}z|<\pi,\;\; |z|\rightarrow\infty .
\end{equation}
Making the substitution $z=(s+1)/2$, we have
\begin{equation}
    \zeta\!\left(2,\frac{s+1}{2}\right)\sim \frac{2}{s(1+1/s)}+\frac{2}{s^2 (1+1/s)^2}+\sum_{k=1}^{\infty} \frac{2^{2k+1}B_{2k}}{s^{2k+1}(1+1/s)^{2k+1}},
\end{equation}
for arbitrarily large $(s+1)/2$. 

We have been careful not to replace $(s+1)/2$ with $s/2$, doing so would miss out subtle features of the large behavior of the function. We next perform the following expansions
\begin{equation}
	\frac{1}{(1+m)^2}=\sum_{m=0}^{\infty}(-1)^m(m+1)x^{m},\;
	\frac{1}{(1+x)^{2k+1}}=\frac{1}{(2k)!} \sum_{m=0}^{\infty}(-1)^m\frac{ (m+2k)!}{m!} x^m
\end{equation}
valid for $|x|<1$. Substituting these expansions back into the series and performing some simplifications on the first two terms lead to
\begin{equation}
\begin{split}
	\zeta\!\left(2,\frac{s+1}{2}\right)\sim & \frac{2}{s}-2 \sum_{k=1}^{\infty} \frac{(2k-1)}{s^{2k+1}} + 4 \sum_{k=2}^{\infty} \frac{(k-1)}{s^{2k}} \\
	&\hspace{12mm}+ \sum_{k=1}^{\infty}\frac{2^{2k+1} B_{2k}}{(2k)! \,s^{2k+1}} \sum_{l=0}^{k}(-1)^l \frac{(l+2k)!}{l!\, s^l} .
	\end{split}
\end{equation}
We rearrange the double sum and separate the even and odd terms powers of $1/s$. The result is
\begin{equation}
	\begin{split}
	&\sum_{k=1}^{\infty}\frac{2^{2k+1} B_{2k}}{(2k)! \,s^{2k+1}} \sum_{l=0}^{\infty}(-1)^l \frac{(l+2k)!}{l!\, s^l}\\
	&\hspace{4mm}=-\sum_{k=2}^{\infty} \frac{(2k-1)!}{s^{2k}}\sum_{l=1}^{k-1} \frac{2^{2l+1} B_{2l}}{(2k-2l-1)! (2l)!} + \sum_{k=1}^{\infty} \frac{(2k)!}{s^{2k+1}} \sum_{l=1}^k \frac{2^{2l+1} B_{2l}}{(2k-2l)! (2l)!}.
	\end{split}
\end{equation}
Substituting the rearranged double sum back into equation and collecting the odd and even powers yield
\begin{equation}\label{heho}
	\begin{split}
\zeta\!\left(2,\frac{s+1}{2}\right)\sim &\frac{2}{s}-2\sum_{k=1}^{\infty} \left[(2k-1)-(2k)! \sum_{l=1}^k \frac{2^{2l} B_{2l}}{(2l)! (2k-2l)!}\right]\frac{1}{s^{2k+1}} \\
&+ \sum_{k=2}^{\infty}\left[ 4(k-1) - (2k-1)! \sum_{l=1}^{k-1} \frac{2^{2l+1} B_{2l}}{(2l)! (2k-2l-1)}	\right]\frac{1}{s^{2k}}.
\end{split}
\end{equation}

Let us look at each terms separately. The inverse even powers of $s$ turns out to vanish due to the fact that
\begin{equation}\label{sum1}
	\sum_{l=1}^{k-1} \frac{2^{2l+1} B_{2l}}{(2l)! (2k-2l-1)!} =\frac{4(k-1)}{(2k-1)!}.
\end{equation}
To establish this summation identity, we form the sum
\begin{equation}
	\sum_{k=2}^{\infty}\frac{4(k-1)}{(2k-1)!} t^k = 2\sinh\sqrt{t}\left(t\coth\sqrt{t}-\sqrt{t}\right)
\end{equation}
and substitute the expansions
\begin{equation}
	\sinh\sqrt{t}=\sum_{k=0}^{\infty} \frac{t^{k+1/2}}{(2k+1)!},\;\;\; \coth\sqrt{t}=\sum_{l=0}^{\infty} \frac{2^{2l} B_{2l} \,t^{k-1/2}}{(2l!)}
\end{equation}
back into the equation. Performing a rearrangement to collect equal powers of $t$ yields the equality 
\begin{equation}
	\sum_{k=2}^{\infty} \frac{4(k-1)}{(2k-1)!}t^k=\sum_{k=2}^{\infty} \sum_{l=1}^{k-1} \frac{2^{2l+1} B_{2l}}{(2l)! (2k-2l-1)!}t^k .
\end{equation}
By appealing to the uniqueness of power series, the coefficients of each power of $t$ must be equal. This leads to the desired equality \eqref{sum1}.

For the odd powers, we wish now to evaluate the sum in the coefficient. The sum can be written as
\begin{equation}\label{sumo}
	(2k)!\sum_{l=1}^{k}\frac{2^{2l} B_{2l}}{(2l)!(2k-2l)!}=(2k)!\sum_{l=1}^{k-1}\frac{2^{2l} B_{2l}}{(2l)!(2k-2l)!}+2^{2k} B_{2k}
\end{equation}
obtained by isolating the $l=k$ term. We have by the Namias recurrence relation \cite{namias}
\begin{equation}
	B_{2k}=\frac{1}{2(2^{2k}-1)} \left[(2k-1)-(2k)!\sum_{l=1}^{k-1}\frac{2^{2l} B_{2l}}{(2l)!(2k-2l)!}\right].
\end{equation}
The sum in the right hand side of \eqref{sumo} can be ellimanated using thisr recurrence relation. We obtain the sum
\begin{equation}\label{sumo1}
	(2k)!\sum_{l=1}^{k}\frac{2^{2l} B_{2l}}{(2l)!(2k-2l)!}=(2k-1)-(2^{2k}-2) B_{2k}.
\end{equation}

Substituting the sums back into equation \eqref{heho}, we obtain the desired asymptotic expansion 
\begin{equation}
	\zeta\!\left(2,\frac{s+1}{2}\right)\sim \frac{2}{s}-2\sum_{k=1}^{\infty}(2^{2k}-2) B_{2k}\frac{1}{s^{2k+1}}, s\rightarrow\infty .
\end{equation}
This implies the asymptotic equality
\begin{equation}
	\frac{1}{2}s^{2n+1} \zeta\left(2,\frac{s+1}{2}\right)+\sum_{l=0}^n (2^{2l}-2) B_{2l} s^{2n-2l} = O(s^{-2}),\; s\rightarrow \infty,
\end{equation}
for all positive integer $n$. Thus the integral exists at the upper limit of integration. This justifies the interchange of the two integrations above and hence establishes the result. 

\section*{Acknowledgement} This work was funded by the UP-System Enhanced Creative Work and Research Grant (ECWRG 2019-05-R). 

Data sharing is not applicable to this article as no new data were created or analyzed in this study.

\end{document}